\documentclass{article}
\usepackage[margin=1in]{geometry}
\usepackage{amsmath}
\usepackage{amsthm}
\usepackage{amssymb}
\usepackage{array}
\usepackage{booktabs}
\usepackage{graphicx}
\usepackage{hyperref}
\usepackage{fvextra}

\newtheorem{theorem}{Theorem}
\newtheorem{proposition}[theorem]{Proposition}
\newtheorem{lemma}[theorem]{Lemma}
\newtheorem{corollary}[theorem]{Corollary}
\theoremstyle{remark}
\newtheorem{remark}{Remark}

\title{Optimal Extensions of Cross-Sections:\\
Sphere Packings in Dimensions 38 to 43}
\author{Ivan Dorofeev\thanks{\texttt{qft98romanov@gmail.com}} \and
  Xiaoming Sun\thanks{State Key Lab of Processors, Institute of Computing Technology, Chinese Academy of Sciences, Beijing, China. University of Chinese Academy of Sciences, Beijing, China.
  \texttt{sunxiaoming@ict.ac.cn}} \and
  Chengu Wang\thanks{\texttt{wangchengu@gmail.com}}
}
\date{}

\begin{document}

\maketitle

\begin{abstract}
  We improve the best known sphere packings in every dimension from $38$ to
  $43$. Our packings in dimensions $38$ to $42$ come from one chain of cross-sections of the extremal
  even unimodular lattice $P_{48p}$,
  $\sqrt3E_6 \subset \sqrt3E_7 \subset \sqrt3E_8 \subset K_9 \subset K_{10}$,
  whose first three members are cut from the fixed lattice
  $\sqrt3(E_8 \perp E_8)$ of an order-three automorphism; their orthogonal
  complements are lattice packings and set the
  records in dimensions $42$ down to $38$. Each successive section is a
  determinant-minimal extension of its predecessor.
  Our $43$-dimensional packing is an antipode packing: six translates of
  the complement of a $5$-dimensional section.

  We also improve some kissing numbers. Conway and
  Sloane's twelve $1982$ cross-section packings appear never to have had
  theirs computed; we compute them and find that, in dimensions $42$ to $47$,
  they exceed the previously tabulated lower bounds.
  Our chain does better
  in dimensions $40$, $41$ and $42$, and a further antipode packing
  beats the record in dimension $45$.
\end{abstract}

\section{Introduction}
\label{sec:intro}

A \emph{sphere packing} in $\mathbb{R}^n$ is a collection of congruent balls
with disjoint interiors, and its \emph{density} is the fraction of space the
balls cover. Following Conway and Sloane~\cite{conway1999sphere}, we measure packings by their
\emph{center density}
\[
  \delta(\mathcal{P}) \;=\; \frac{\Delta(\mathcal{P})}{V_n},
\]
where $\Delta(\mathcal{P})$ is the density and $V_n$ is the volume of the unit
ball. The sphere packing problem is solved only in dimensions $1$, $2$, $3$
\cite{hales2005proof}, $8$ \cite{viazovska2017sphere} and $24$
\cite{cohn2017sphere}; in every other dimension the state of the art is a
record set by an explicit construction, and we refer to the table maintained
by Cohn~\cite{cohn2026packing}.

Around dimension $48$ the densest packings known come from the extremal even
unimodular lattices of that dimension and their cross-sections. In
\emph{Laminated lattices}, Conway and Sloane~\cite[Corollary~8]{conway1982laminated} exhibited
cross-sections of $P_{48p}$ that set the records in dimensions $39$--$43$,
and those records stood for four decades; the record in dimension $38$ was
held by a lattice of another family, $D_{38}$ of center density $8$
\cite[p.~xxxvii]{conway1999sphere}. The antipode construction of
Conway and Sloane~\cite{conway1996antipode}, which places several translates of a
cross-section over a cluster in its dual, improved dimensions $44$--$47$;
Chen et al.~\cite{chen2025antipode} applied it to deliberately suboptimal cross-sections
and improved dimensions $19$, $20$, $21$, $23$, $44$, $45$ and $47$.

This paper improves all six dimensions from $38$ to $43$
(Table~\ref{tab:results}), and, more to the point, derives them from a
single mechanism rather than six separate tricks.

\begin{table}[ht]
  \centering
  \small
  \setlength{\tabcolsep}{3pt}
  \begin{tabular}{ccccccc}
    \toprule
    & \multicolumn{3}{c}{previously known} & \multicolumn{3}{c}{this paper} \\
    \cmidrule(lr){2-4} \cmidrule(lr){5-7}
    dim & construction & lattice? & $\delta$ & construction & lattice? & $\delta$ \\
    \midrule
    $38$ & $D_{38}$ \cite{conway1999sphere} & yes & $2^3 = 8$
    & Section~\ref{sec:chain} & yes & $2^{-19}3^{31/2}5^{-1} = 9.48\ldots$ \\
    $39$ & cross-section \cite{conway1982laminated} & yes & $2^{-41/2}3^{16}7^{-1/2} = 10.97\ldots$
    & Section~\ref{sec:chain} & yes & $2^{-20}3^{16}5^{-1/2} = 18.35\ldots$ \\
    $40$ & cross-section \cite{conway1982laminated} & yes & $2^{-45/2}3^{17} = 21.77\ldots$
    & Section~\ref{sec:chain} & yes & $2^{-20}3^{16} = 41.05\ldots$ \\
    $41$ & cross-section \cite{conway1982laminated} & yes & $2^{-43/2}3^{17} = 43.54\ldots$
    & Section~\ref{sec:chain} & yes & $2^{-21}3^{17} = 61.57\ldots$ \\
    $42$ & cross-section \cite{conway1982laminated} & yes & $2^{-22}3^{18} = 92.36\ldots$
    & Section~\ref{sec:chain} & yes & $2^{-21}3^{35/2} = 106.65\ldots$ \\
    $43$ & cross-section \cite{conway1982laminated} & yes & $2^{-45/2}3^{19} = 195.94\ldots$
    & Section~\ref{sec:clusters} & no & $2^{-223/2}3^{-21}137^{43/2} = 226.51\ldots$ \\
    \bottomrule
  \end{tabular}
  \caption{Center densities $\delta$. The columns ``lattice?'' say whether
    the packing is a lattice packing; our dimension-$43$ packing is an
    antipode packing of $s = 6$ translates of a lattice
  in~\eqref{eq:master}.}
  \label{tab:results}
\end{table}

\paragraph{The mechanism.}
Let $\Lambda$ be one of the four known extremal even unimodular
$48$-dimensional lattices, so $\det\Lambda = 1$ and the minimal squared norm
is $\mu = 6$. A saturated cross-section $K \subset \Lambda$ of rank $k$ has an
orthogonal complement $L = \Lambda \cap \operatorname{span}(K)^{\perp}$ of
rank $l = 48-k$ with $\det L = \det K$ and minimum at least $6$, and a finite
set $S$ of $s$ points of the dual lattice $K^{*}$ of pairwise squared
distance at most
$\beta$ produces, by the \emph{antipode construction} of Conway and
Sloane~\cite{conway1996antipode}, an $l$-dimensional packing of center density
\begin{equation}
  \label{eq:master}
  \delta \;=\; \frac{s}{2^{l}}\,
  \frac{(6-\beta)^{l/2}}{\sqrt{\det K}} .
\end{equation}
Everything in this paper is a choice of $(K, S)$ in~\eqref{eq:master}, and
the two ways to make $\delta$ large pull against each other: a small
$\det K$ makes $K^{*}$ sparse, which limits $s$, while a cluster large enough
to pay for a bigger determinant needs a $K^{*}$ with many short vectors.

Formula~\eqref{eq:master} and the two bounds that constrain it are due to
Conway and Sloane, and are collected in Section~\ref{sec:prelim}. This
paper adds four things. First, a sharpening of their determinant bound for
one-vector extensions. Write $\rho_{\mathrm{glue}}(K)$ for the largest
distance from a point of $K^{*}$ to $K$, a maximum over the
$\det K$ classes of $K^{*}/K$; we call it the \emph{glue radius}.
Then every rank-$(k+1)$ cross-section containing $K$ has determinant at least
$\det K\,(6-\rho^2_{\mathrm{glue}}(K))$, in \emph{every} integral ambient of
minimum $6$ (Lemma~\ref{lem:covering}). Conway and Sloane control the analogous
quantity by a subcovering radius and then by the ordinary covering radius;
here integrality restricts the relevant projections to $K^{*}$, giving
$\rho_{\mathrm{glue}}(K)\le\rho_{\mathrm{cov}}(K)$, the ordinary covering
radius, and a finite computation of rank $k$. Second, the exact values of $\rho^2_{\mathrm{glue}}$ along the chain
$ \sqrt3E_6 \;\subset\; \sqrt3E_7 \;\subset\; \sqrt3E_8 \;\subset\;
K_9 \;\subset\; K_{10} $
of cross-sections of $P_{48p}$, which prove each one the smallest-determinant
extension of the one before it (Theorem~\ref{th:chain}). Third, the sections
$K_9$ and $K_{10}$ themselves, whose complements are the records in dimensions
$39$ and $38$; the first three sections of the chain come from the fixed
lattice $\sqrt3(E_8\perp E_8)$ of an order-three automorphism of $P_{48p}$ and
give dimensions $42$, $41$ and $40$. Fourth, the dimension-$43$ packing, which
by Corollary~\ref{cor:d43} cannot come from a bare cross-section at the
inherited ambient radius.

Section~\ref{sec:kissing} adds kissing numbers, which
Proposition~\ref{prop:kissing} makes a finite count over the ambient's
minimal vectors. Corollary~8 of Conway and Sloane~\cite{conway1982laminated}
constructs twelve cross-section packings of $P_{48p}$ in dimensions $38$ to
$47$ and states their densities, but their kissing numbers appear never to
have been computed. We reconstruct all twelve in closed form and count them
exactly, and find that in dimensions $42$ to $47$ their kissing numbers exceed
the previously tabulated lower bounds. Our
chain does better still in dimensions $40$, $41$ and $42$, and one further
cross-section, of $P_{48m}$ and $P_{48q}$, beats the record in
dimension $45$ while its density ties the record.

\begin{theorem}
  \label{th:main}
  Sphere packings exist that give
  \[
    \begin{array}{lll}
      \delta^{\mathrm{lat}}_{38} \ge \dfrac{3^{31/2}}{2^{19}\cdot 5} = 9.48\ldots, &
      \delta^{\mathrm{lat}}_{39} \ge \dfrac{3^{16}}{2^{20}\cdot {5}^{1/2}} = 18.35\ldots, &
      \delta^{\mathrm{lat}}_{40} \ge \dfrac{3^{16}}{2^{20}} = 41.05\ldots, \\[3ex]
      \delta^{\mathrm{lat}}_{41} \ge \dfrac{3^{17}}{2^{21}} = 61.57\ldots, &
      \delta^{\mathrm{lat}}_{42} \ge \dfrac{3^{35/2}}{2^{21}} = 106.65\ldots, &
      \delta_{43} \ge \dfrac{137^{43/2}}{2^{223/2}\cdot 3^{21}} = 226.51\ldots
    \end{array}
  \]
  The first five are attained by lattice packings, that is by bare
  cross-section packings~\eqref{eq:bare}, and the last by a union of six
  translates of a lattice (Table~\ref{tab:results}).
\end{theorem}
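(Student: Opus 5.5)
Every bound in the statement is an instance of \eqref{eq:master}, so the proof consists of exhibiting the cross-sections $K$ of $P_{48p}$ (and, in dimension $43$, a cluster $S$), computing $\det K$, and substituting. For the five lattice bounds we take the bare case $s=1$, $\beta=0$. Then $\delta = 6^{l/2}/(2^l\sqrt{\det K}) = 3^{l/2}2^{-l/2}(\det K)^{-1/2}$, and the complement $L$ is a lattice of minimum at least $6$ with $\det L=\det K$.

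\textbf{Determinants along the chain.} We use the standard values $\det E_6=3$, $\det E_7=2$, $\det E_8=1$, so
\[
\det\sqrt3E_6 = 3^6\cdot 3 = 3^7,\qquad \det\sqrt3E_7 = 3^7\cdot 2,\qquad \det\sqrt3E_8 = 3^8 .
\]
Substituting with $l=42,41,40$:
\begin{itemize}
\item $l=42$: $3^{21}2^{-21}3^{-7/2}=3^{35/2}2^{-21}$.
\item $l=41$: $3^{41/2}2^{-41/2}3^{-7/2}2^{-1/2}=3^{17}2^{-21}$.
\item $l=40$: $3^{20}2^{-20}3^{-4}=3^{16}2^{-20}$.
\end{itemize}
These match the statement. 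Reverse-engineering the stated values gives $\det K_9=3^{8}\cdot 20/9\cdot\ldots$; more precisely, $l=39$ forces $\det K_9 = 3^{7}\cdot 5$ and $l=38$ forces $\det K_{10} = 3^{4}\cdot 2\cdot 5^2$, up to rechecking. So the plan is to construct these sections explicitly and read off their Gram matrices.

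For the first three, fix the order-three automorphism of $P_{48p}$ whose fixed lattice is $\sqrt3(E_8\perp E_8)$. The key fact is that the fixed sublattice of an automorphism of order coprime to $2$ is saturated in $P_{48p}$, and that saturated sublattices of a saturated sublattice are again saturated in $\Lambda$. Taking $\sqrt3E_8$ inside one factor, then $\sqrt3E_7$ and $\sqrt3E_6$ as root sublattices cut out by orthogonality inside it, therefore gives saturated cross-sections. Their determinants are as computed above, and the complements have minimum at least $6$ automatically because $\Lambda$ is extremal.

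For $K_9$ and $K_{10}$, we adjoin one vector $v\in P_{48p}$ at a time and check saturation of $K_9=\operatorname{span}_\mathbb{Q}(\sqrt3E_8,v)\cap\Lambda$. In practice we take explicit coordinates of $P_{48p}$ (Conway--Sloane's construction from the ternary Pless code) and compute the Gram matrix together with the Smith form of the inclusion. Theorem~\ref{th:chain} (via Lemma~\ref{lem:covering}) then certifies that these are determinant-minimal. That certification is not needed for the existence claim, only for the explanatory content.

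\textbf{Dimension $43$.} We take $K$ of rank $5$ with complement $L$ of rank $43$, and a cluster $S\subset K^*$ of $s=6$ points of pairwise squared distance at most $\beta$. Matching $6^{?}$ against $137^{43/2}$ suggests $6-\beta = 137/k$ for a rational arising from the Gram matrix of $K^*$; for instance $\beta = 6-137/2^{?}3^{?}$, with $\det K$ absorbing the remaining powers of $2$ and $3$. The steps are to find the 5-dimensional section, compute $K^*$, locate six vectors forming a tight simplex-like cluster, and verify that the exponents reproduce $2^{-223/2}3^{-21}137^{43/2}$.

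\textbf{Main obstacle.} The hard step is the explicit search: finding $K_9$, $K_{10}$ and the rank-$5$ section with its $6$-point cluster inside a concrete model of $P_{48p}$, and certifying saturation. I would first enumerate minimal vectors of $P_{48p}$ that have small inner products with $\sqrt3E_8$, pick $v$ minimizing the resulting $\det$, and verify by computer.
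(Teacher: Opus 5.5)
Your treatment of dimensions $42$, $41$ and $40$ matches the paper: Nebe's order-three $\sigma$ with $\operatorname{Fix}(\sigma)\cong\sqrt3(E_8\perp E_8)$, nested saturated sections $\sqrt3E_6\subset\sqrt3E_7\subset\sqrt3E_8$, and substitution into \eqref{eq:bare}. Your orthogonal-complement description of these sections works as well as the paper's simple-root subsets. Note that the fixed lattice of \emph{any} automorphism is saturated, being $\Lambda\cap\ker(\sigma-1)$, so coprimality to $2$ plays no role.

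For dimensions $39$, $38$ and $43$, however, you give a search plan, not a proof. These three bounds are existence statements whose whole content is a concrete object, and the paper's proof consists of exhibiting it. For $K_9$ and $K_{10}$ it gives the Gram matrices $\mathbf{K}_9$, $\mathbf{K}_{10}$, realized in $P_{48p}$ by the certificate matrices $\mathbf{T}$, with saturation checked by the Smith form. In dimension $43$ it gives a rank-$5$ section with $\det\mathbf{K}=3072=2^{10}\cdot 3$ whose inverse $\mathbf{M}$ satisfies the hypotheses of Theorem~\ref{th:simplex} at $\beta=7/24$. Then $S=\{0,m_1,\dots,m_5\}$ is an admissible $6$-point cluster and $6\cdot 3072^{-1/2}(137/96)^{43/2}=137^{43/2}2^{-223/2}3^{-21}$. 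Nothing in your argument shows that any of these objects exists, so those three inequalities remain unproved.

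The guidance you give for the search is also off in three places.

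\emph{Target determinants.} Solving $(3/2)^{39/2}(\det K_9)^{-1/2}=3^{16}2^{-20}5^{-1/2}$ gives $\det K_9=2\cdot 3^7\cdot 5=21870$. Solving $(3/2)^{19}(\det K_{10})^{-1/2}=3^{31/2}2^{-19}5^{-1}$ gives $\det K_{10}=3^7\cdot 5^2=54675$. Your value $3^7\cdot 5$ is in fact impossible for a rank-$9$ section containing $\sqrt3E_8$: Proposition~\ref{prop:extend}(2) and Lemma~\ref{lem:covering} give $\det K_9=3^8|\pi_V(v)|^2\ge 3^8(6-8/3)=21870$.

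\emph{Search heuristic.} It is backwards. Because $|\pi_V(v)|^2=|v|^2-|\pi_U(v)|^2$, the determinant is minimized by minimal vectors with \emph{large} projection onto $\operatorname{span}(\sqrt3E_8)$, not small inner products with it. The paper uses those with $\pi_U(v)=e/\sqrt3$ and $|e|^2=8$, which give $|\pi_V(v)|^2=10/3$.

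\emph{Ambient lattice.} You confine the dimension-$43$ search to $P_{48p}$, but the paper's section lives in $P_{48m}$ and $P_{48n}$. Nothing indicates that it embeds in $P_{48p}$, so your search could simply come up empty.
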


\begin{table}[ht]
  \centering
  \begin{tabular}{clclc}
    \toprule
    & \multicolumn{2}{c}{previously known} & \multicolumn{2}{c}{this paper} \\
    \cmidrule(lr){2-3} \cmidrule(lr){4-5}
    dim & construction & kissing number & construction & kissing number \\
    \midrule
    $40$ & binary codes \cite{edel1998kissing, brouwer2026tables, cohn2026kissing} & $1{,}064{,}368$
    & cross-section Section~\ref{sec:chain} & $1{,}092{,}000$ \\
    $41$ & binary codes \cite{edel1998kissing, brouwer2026tables, cohn2026kissing} & $1{,}170{,}384$
    & cross-section Section~\ref{sec:chain} & $1{,}324{,}472$ \\
    $42$ & cross-section \cite[Corollary~8]{conway1982laminated} & $1{,}541{,}292$
    & cross-section Section~\ref{sec:chain} & $1{,}792{,}386$ \\
    $45$ & cross-section \cite[Corollary~8]{conway1982laminated} & $6{,}702{,}080$
    & antipode Section~\ref{sec:kissing45} & $7{,}379{,}838$ \\
    \bottomrule
  \end{tabular}
  \caption{Kissing numbers. The
    previously known entries are the largest previously tabulated: in
    dimensions $40$ and $41$ these are kissing configurations built from
    binary codes, not known to extend to packings~\cite{edel1998kissing}; in
    dimensions $42$ and $45$ they are cross-sections of Conway and
    Sloane~\cite[Corollary~8]{conway1982laminated}, constructed without
    computing their kissing numbers, which we compute here
    (Table~\ref{tab:cor8}). Our packings in dimensions $40$, $41$ and $42$ are
    lattices; the dimension-$45$ one is an antipode packing of $s = 4$
  translates of a lattice.}
  \label{tab:kissresults}
\end{table}

\begin{theorem}
  \label{th:kissing}
  There are sphere packings with kissing numbers
  $1{,}092{,}000,\, 1{,}324{,}472,\,  1{,}792{,}386,\, 7{,}379{,}838$
  in dimensions $40$, $41$, $42$ and $45$
  (Table~\ref{tab:kissresults}). The first three are lattice packings; the
  last is a union of four translates of a lattice.
\end{theorem}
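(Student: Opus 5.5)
The theorem is existential, so I prove it by exhibiting the four packings and counting minimal vectors. By Proposition~\ref{prop:kissing}, each count reduces to a finite count over the $52{,}416{,}000$ minimal vectors of $P_{48p}$, or of $P_{48m}$, $P_{48q}$ for dimension $45$.

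For dimensions $40$, $41$, $42$ I take the bare cross-section packings $L=\Lambda\cap\operatorname{span}(K)^{\perp}$ with $\Lambda=P_{48p}$ and $K=\sqrt3E_8,\sqrt3E_7,\sqrt3E_6$ from the chain of Theorem~\ref{th:chain}.

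\begin{enumerate}
\item First I check that the minimum of $L$ is exactly $6$. The minimum is at least $6$ because $L\subset\Lambda$. It equals $6$ as soon as a single vector of norm $6$ lies in $L$, and the count in the next step exhibits such vectors.
\item The kissing number of the lattice packing $L$ is then the number of norm-$6$ vectors of $\Lambda$ orthogonal to $K$. To count them I fix a basis $b_1,\dots,b_k$ of $K$ realized inside $\Lambda$, coming from the fixed lattice $\sqrt3(E_8\perp E_8)$ of the order-three automorphism. I then enumerate the minimal vectors $v$ of $\Lambda$ and keep those with $(v,b_i)=0$ for all $i$.
\item To make this verifiable rather than a black box, I organize the count by the action of the order-three automorphism $\sigma$. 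Its fixed sublattice contains $K$. The minimal vectors split into $\sigma$-orbits, and on each orbit the inner products with the fixed sublattice are determined by the projection to the fixed space. I therefore count, for each possible projection onto $\sqrt3(E_8\perp E_8)^{*}$, the vectors with projection orthogonal to $K$. Vectors in the $-$-eigenspace part, with zero projection, contribute uniformly to every member of the chain.
\item The expected totals are $1{,}092{,}000$, $1{,}324{,}472$ and $1{,}792{,}386$. Monotonicity is a sanity check: a smaller $K$ imposes fewer orthogonality constraints, so the count grows along $\sqrt3E_8\supset\sqrt3E_7\supset\sqrt3E_6$, which matches these totals.
\end{enumerate}

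For dimension $45$ I take a rank-$3$ cross-section $K$ of $P_{48m}$ (equivalently $P_{48q}$) with complement $L$. I choose a cluster $S\subset K^{*}$ of $s=4$ points and form the antipode packing $\bigcup_{x\in S}(L+\pi(\tilde x))$. Here $\tilde x\in\Lambda$ is a lift of the glue class of $x$, and $\pi$ is the projection to $\operatorname{span}(L)$. With the squared distances in $S$ bounded by $\beta$, the minimum distance is $6-\beta$. The density agrees with \eqref{eq:master} and ties the record.

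For the kissing number I count, for a centre in translate $x$, the contact points in every translate $y$, including $y=x$. Contacts with translate $y$ are the vectors $v\in\Lambda$ in the class of $y-x$ whose $K$-component is the fixed vector $y-x$ and whose $L$-component has norm $6-\beta$. When $|y-x|^2=\beta$ these are exactly the minimal vectors of $\Lambda$ whose $K^{*}$-projection equals $y-x$. When $|y-x|^2<\beta$ they are vectors of norm $6-\beta+|y-x|^2>6$, so a finite theta-series count at a slightly higher shell is needed.

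The antipode packing is not a lattice, so the kissing number is the minimum over the four choices of $x$. The main obstacle is exactly this: choosing $(K,S)$ so that all pairwise distances in $S$ equal $\beta$, making every contact a minimal vector, and so that the four local counts coincide at $7{,}379{,}838$. I would first look for $S$ as the orbit of a symmetry of $K^{*}$ fixing the coset structure, since that forces the local counts to agree. I would then verify the count by projecting the minimal vectors of $\Lambda$ onto $K^{*}$ and tallying the fibres over $S-S$.
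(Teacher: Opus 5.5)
For dimensions $40$, $41$, $42$ your argument matches the paper's. You take the bare cross-sections orthogonal to $K_8$, $K_7$, $K_6$ in $P_{48p}$; by Remark~\ref{rem:bare} their kissing numbers are the numbers of minimal vectors of $P_{48p}$ orthogonal to $K$, obtained by one sweep of the minimal shell. Your $\sigma$-orbit bookkeeping is harmless but unnecessary. Note only that the count depends on the embedding (Remark~\ref{rem:embedding}), so the stated numbers are those of the specific certified basis.

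The genuine gap is dimension $45$: you never exhibit the packing. For an existence statement the whole content is the pair $(K,S)$, and you leave it as ``the main obstacle'' to be found by searching for a symmetric orbit. Without $\det K$ and $\beta$ you can neither assert that the density ties the record nor carry out any count. The paper supplies the data.
\begin{itemize}
\item $K$ is the saturated rank-$3$ section of $P_{48m}$ (and of $P_{48q}$) with Gram matrix $8I-2J$, that is, diagonal $6$ and off-diagonal $-2$. It has $\det K=128$ and $\mathbf{M}=\frac18(I+J)$, and is isometric to $2\sqrt2A_3^{*}$.
\item $S=\{0,m_1,m_2,m_3\}$ is the origin with the dual basis. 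Its six pairwise squared distances are all exactly $\frac14$, since $\mathbf{M}_{ii}=\frac14$ and $\mathbf{M}_{ii}+\mathbf{M}_{jj}-2\mathbf{M}_{ij}=\frac14$.
\item Theorem~\ref{th:simplex} with $\beta=\frac14$ gives density $4\cdot128^{-1/2}(23/16)^{45/2}=23^{45/2}/2^{183/2}$.
\item The exhaustive count finds $2{,}459{,}946$ minimal vectors in every fiber over $u_j-u_i$. So each sphere touches $3\cdot2{,}459{,}946=7{,}379{,}838$ others.
\end{itemize}
The equality of the four local counts comes from this fiber computation, not from a symmetry of $S$.

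Your contact analysis also has a sign error, and it distorts what you take the obstacle to be. A contact with translate $y$ needs $v\in\Lambda$ with $\pi_U(v)=y-x$ and $|\pi_V(v)|^2=6-\beta$. Then $|v|^2=6-\beta+|y-x|^2$, which is \emph{at most} $6$, with equality if and only if $|y-x|^2=\beta$. When $|y-x|^2<\beta$, in particular for $y=x$, the required norm is below the minimum $6$. So there are no such vectors, and no higher-shell theta-series count is ever needed. Every contact automatically comes from a minimal vector of $\Lambda$, which is exactly Proposition~\ref{prop:kissing}. Making all distances in $S$ equal to $\beta$ is therefore not needed for the count to reduce to the minimal shell; it only makes every pair of translates contribute contacts.

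Finally, Proposition~\ref{prop:kissing} takes the kissing number to be $\max_i N_i$. That is what a lower bound on the kissing number needs, since one sphere with that many neighbours suffices. Your minimum is a stronger requirement, harmless here only because all $N_i$ coincide.
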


\section{Preliminaries}
\label{sec:prelim}

\subsection{Cross-sections}
\label{sec:sections}

Let $\Lambda$ be an $n$-dimensional lattice of minimal squared norm $\mu$ and
let $\mathbb{R}^n = U \oplus V$ be an orthogonal decomposition with
$\dim U = k$, $\dim V = l = n-k$. Write
\[
  K = \Lambda \cap U, \qquad L = \Lambda \cap V, \qquad
  M = \pi_U(\Lambda),
\]
where $\pi_U$, $\pi_V$ are the orthogonal projections, and assume $K$ has
rank $k$. A sublattice $K \subseteq \Lambda$ is \emph{saturated} (or
primitive) when $\Lambda/K$ is torsion-free; $\Lambda \cap U$ is always
saturated, and conversely a saturated sublattice is the intersection of
$\Lambda$ with its own span. For $\Lambda$ unimodular, discriminant gluing
identifies the discriminant groups of $K$ and $L$ anti-isometrically
\cite[Prop.~1.9.8]{martinet2003perfect}, \cite[Thm.~4]{conway1982laminated},
so that
\begin{equation}
  \label{eq:detequal}
  \det L = \det K = 1/\det M .
\end{equation}
We write $K^{*}$ and $L^{*}$ for the duals of $K$ in $U$ and of $L$ in $V$.
The next fact is used throughout; it needs $L$ saturated, which is automatic,
and not $K$ saturated.

\begin{lemma}
  \label{lem:projdual}
  If $\Lambda$ is unimodular and $L = \Lambda \cap V$, then
  $\pi_V(\Lambda) = L^{*}$. Likewise $\pi_U(\Lambda) = K^{*}$ when
  $K = \Lambda \cap U$.
\end{lemma}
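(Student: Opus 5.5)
We prove the inclusion $\pi_V(\Lambda)\subseteq L^{*}$ directly, and then the reverse inclusion by extending functionals from $L$ to $\Lambda$ using unimodularity. The statement for $U$ follows by exchanging the roles of $U$ and $V$, since $K=\Lambda\cap U$ is exactly the analogue of $L$.

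\emph{Inclusion $\pi_V(\Lambda)\subseteq L^{*}$.} Take $x\in\Lambda$ and $y\in L$. Since $y\in V$, we have $\langle \pi_V(x),y\rangle=\langle x,y\rangle$. This is an integer because $\Lambda$ is integral (unimodular lattices are integral, $\Lambda\subseteq\Lambda^{*}=\Lambda$). Hence $\pi_V(x)\in L^{*}$. This step uses neither saturation nor unimodularity beyond integrality.

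\emph{Inclusion $L^{*}\subseteq\pi_V(\Lambda)$.} Let $w\in L^{*}\subset V$, which we regard as the homomorphism $L\to\mathbb{Z}$, $y\mapsto\langle w,y\rangle$.
\begin{itemize}
\item Since $L=\Lambda\cap V$ is saturated, $\Lambda/L$ is torsion-free, hence free. So $L$ is a direct summand, $\Lambda=L\oplus C$ as abelian groups.
\item Therefore the homomorphism $w$ extends to a homomorphism $\phi:\Lambda\to\mathbb{Z}$, for instance by setting $\phi=0$ on $C$.
\item By unimodularity, $\Lambda^{*}=\Lambda$, so $\phi$ is represented by some $x\in\Lambda$: $\phi(z)=\langle x,z\rangle$ for all $z\in\Lambda$.
\item For $y\in L$ we get $\langle \pi_V(x),y\rangle=\langle x,y\rangle=\langle w,y\rangle$.
\item Both $\pi_V(x)$ and $w$ lie in $V$, and $L$ spans $V$, so $\pi_V(x)=w$.
\end{itemize}

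The only point needing care is that $L$ has full rank in $V$, so that pairing against $L$ determines a vector of $V$. In the setup, $K$ has rank $k$ by assumption, and the rank of $L$ must be read as $l$, which is the situation in which $L^{*}$ is defined as the dual in $V$. If $L$ had smaller rank, $L^{*}$ would have to be interpreted in $\operatorname{span}L$ and the identity would fail. So I would state explicitly that $L$ is assumed to span $V$. This holds automatically whenever $K$ spans $U$: $U\cap\Lambda$ and $V\cap\Lambda$ are then complementary primitive sublattices, and $V=U^{\perp}$ is a rational subspace with respect to $\Lambda$, so $\operatorname{rank}L=n-k$.

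Saturation of $L$ enters only in the extension step. This matches the remark preceding the lemma: $K$ need not be saturated for the $V$-statement. For the $U$-statement one applies the same argument to $K=\Lambda\cap U$, which is saturated because it is an intersection with a subspace.
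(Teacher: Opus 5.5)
Your proposal is correct and follows the paper's proof essentially step for step. Integrality gives $\pi_V(\Lambda)\subseteq L^{*}$; for the converse you extend the functional from the direct summand $L$, represent the extension by some $x\in\Lambda$ via unimodularity, and conclude $\pi_V(x)=w$ because $V=\operatorname{span}(L)$. Your remark that $L$ must span $V$ states an assumption the paper uses silently; it holds automatically here because $U$ is spanned by lattice vectors and the Gram matrix is integral.
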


\begin{proof}
  For $x \in \Lambda$ and $\ell \in L$ we have
  $\langle \pi_V(x), \ell\rangle = \langle x, \ell\rangle \in \mathbb{Z}$, so
  $\pi_V(\Lambda) \subseteq L^{*}$. Conversely let $y \in L^{*}$. Since
  $\Lambda/L$ is torsion-free it is free, so $L$ is a direct summand of
  $\Lambda$ and the homomorphism $\ell \mapsto \langle y, \ell \rangle$
  extends to a homomorphism $\Lambda \to \mathbb{Z}$. As $\Lambda$ is
  unimodular, that homomorphism is $\langle x, \cdot\rangle$ for some
  $x \in \Lambda$, and then $\pi_V(x) - y$ is orthogonal to $L$ and lies in
  $V = \operatorname{span}(L)$, hence vanishes.
\end{proof}

\subsection{The antipode construction}

Let $S = \{u_0 = 0, u_1, \ldots, u_{s-1}\} \subset M$ have pairwise squared
distances at most $\beta$ for some $\beta < \mu$. The \emph{antipode packing}
is
\[
  \mathcal{A}(S) \;=\; \{\, \pi_V(w) : w \in \Lambda,\ \pi_U(w) \in S \,\},
\]
a union of $s$ translates of $L$ packed with spheres of radius
$\tfrac12\sqrt{\mu-\beta}$.

\begin{theorem}[Conway and Sloane \cite{conway1996antipode}]
  \label{th:antipode}
  Drawing spheres of radius $\rho = \tfrac12\sqrt{\mu-\beta}$ around the
  points of $\mathcal{A}(S)$ gives an $l$-dimensional sphere packing of center
  density
  \[
    \delta(\mathcal{A}(S)) \;=\; \frac{s\,\rho^{l}}{\sqrt{\det L}} .
  \]
\end{theorem}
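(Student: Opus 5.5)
The plan is to prove two things: that $\mathcal{A}(S)$ really is a packing for spheres of radius $\rho=\tfrac12\sqrt{\mu-\beta}$, i.e.\ distinct points are at squared distance at least $\mu-\beta$, and that the set has the stated center density. I will first describe the fibres. For each $u\in S\subset M=\pi_U(\Lambda)$, choose $w_u\in\Lambda$ with $\pi_U(w_u)=u$; we may take $w_0=0$. If $w,w'\in\Lambda$ have the same $U$-projection, then $w-w'\in\Lambda\cap V=L$. So the fibre over $u$ is exactly $w_u+L$, and its projection to $V$ is the coset $\pi_V(w_u)+L$. Hence $\mathcal{A}(S)=\bigcup_{u\in S}(\pi_V(w_u)+L)$, a union of $s$ translates of $L$.

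\textbf{Minimum distance.} Let $w,w'\in\Lambda$ be distinct with $\pi_U(w)=u$ and $\pi_U(w')=u'$ in $S$. Then $w-w'$ is a nonzero vector of $\Lambda$, so $|w-w'|^2\ge\mu$. Pythagoras gives
\[
|\pi_V(w)-\pi_V(w')|^2=|w-w'|^2-|u-u'|^2\ge \mu-\beta>0 .
\]
Because this bound is strictly positive, distinct $w$ give distinct projections, so the $s$ translates are pairwise disjoint and the map $w\mapsto\pi_V(w)$ is injective on the preimage. This also covers the case $u=u'$, where $w-w'\in L$ and the bound reduces to $\min L\ge\mu$. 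Therefore spheres of radius $\rho$ about the points of $\mathcal{A}(S)$ have disjoint interiors.

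\textbf{Density.} A union of $s$ pairwise disjoint translates of the lattice $L$ in the $l$-dimensional space $V$ has $s$ points per fundamental cell, whose volume is $\sqrt{\det L}$. The density is therefore $s V_l\rho^l/\sqrt{\det L}$, and dividing by $V_l$ gives $\delta=s\rho^l/\sqrt{\det L}$. This is a periodic packing, so density is well defined as a limit over large balls; I expect that to need only a routine remark.

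I expect no real obstacle. The one point needing care is disjointness of the translates, which would otherwise reduce the count $s$. It follows from $\beta<\mu$ through the strict inequality above, and it also needs the $u_i$ to be distinct points of $M$, which holds because $S$ is a set. Substituting $\mu=6$ and $\det L=\det K$ from \eqref{eq:detequal} then recovers \eqref{eq:master}.
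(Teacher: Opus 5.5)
Your proof is correct and is essentially the argument the paper relies on. The paper cites Conway and Sloane for this theorem rather than proving it, but its proof of Proposition~\ref{prop:kissing} uses exactly your Pythagoras bound $|\pi_V(w-w')|^2 = |w-w'|^2 - |u-u'|^2 \ge \mu-\beta$, and it likewise describes $\mathcal{A}(S)$ as the union of the $s$ cosets $\pi_V(w_i)+L$ (Appendix~\ref{app:certificates}), from which the count of $s$ points per fundamental cell of covolume $\sqrt{\det L}$ gives the density.
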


For unimodular $\Lambda$ we have $\det L = \det K$ by~\eqref{eq:detequal}, so
this is $s(\mu-\beta)^{l/2}2^{-l}(\det K)^{-1/2}$, which
is~\eqref{eq:master} when $\mu = 6$. The case $s = 1$, $\beta = 0$ uses the
lattice $L$ with spheres of radius $\sqrt{\mu}/2$, the radius inherited from
the ambient minimum. We call this the \emph{bare cross-section packing}; five
of our six results are of this kind, with center density
\begin{equation}
  \label{eq:bare}
  \delta_{\mathrm{bare}}(L) \;=\;
  \frac{(\mu/4)^{l/2}}{\sqrt{\det K}}
  \;=\; \frac{(3/2)^{(48-k)/2}}{\sqrt{\det K}}
  \qquad (\mu = 6,\ n = 48).
\end{equation}
If $\min(L)>\mu$, then $L$ regarded intrinsically as a lattice can of course
support larger spheres. Throughout this paper, ``bare cross-section packing''
means the construction at the inherited ambient radius $\sqrt{\mu}/2$.

The kissing number of an antipode packing is equally explicit: contacts come
from minimal vectors of $\Lambda$ lying in the fibers of $\pi_U$ over the
difference set of $S$.

\begin{proposition}
  \label{prop:kissing}
  In the packing $\mathcal{A}(S)$, every sphere in the translate indexed by
  $u_i$ touches exactly
  \[
    N_i \;=\; \sum_{\substack{j \ne i \\ |u_i - u_j|^2 = \beta}}
    \#\{\, w \in \Lambda : |w|^2 = \mu,\ \pi_U(w) = u_j - u_i \,\}
  \]
  other spheres, and the kissing number of $\mathcal{A}(S)$ is $\max_i N_i$.
\end{proposition}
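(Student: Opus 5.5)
Fix a point $p = \pi_V(w_0)$ of $\mathcal{A}(S)$, where $w_0 \in \Lambda$ and $\pi_U(w_0) = u_i$. I will compute the squared distance from $p$ to an arbitrary other point $q = \pi_V(w_1)$, with $\pi_U(w_1) = u_j$. Put $w = w_1 - w_0 \in \Lambda$. Then $\pi_U(w) = u_j - u_i$ and $q - p = \pi_V(w)$. By Pythagoras,
\[
  |q-p|^2 \;=\; |w|^2 - |u_j - u_i|^2 .
\]
Two spheres of radius $\rho = \tfrac12\sqrt{\mu-\beta}$ touch exactly when $|q-p|^2 = \mu - \beta$. Theorem~\ref{th:antipode} already gives the packing condition $|q-p|^2 \ge \mu-\beta$, since $|w|^2 \ge \mu$ and $|u_j-u_i|^2 \le \beta$. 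So equality holds if and only if $|w|^2 = \mu$ and $|u_j-u_i|^2 = \beta$ at the same time.

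The case $j = i$ cannot produce a contact: it gives $|q-p|^2 = |w|^2 \ge \mu > \mu - \beta$ whenever $\beta > 0$. If $\beta = 0$, then $S = \{0\}$ and the condition $|u_i-u_j|^2 = \beta$ with $j \neq i$ is empty. In that case the bare packing's contacts come from $j = i$ with $|w|^2 = \mu$, and the formula should be read with this convention. I will state the proof for $\beta > 0$ and remark that for $\beta = 0$ the same count includes $j = i$. This degenerate edge case is the only real subtlety.

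Next I need the map from contacts to vectors $w$ to be a bijection. For fixed $i$ and $j$, send each touching point $q$ in translate $j$ to the vector $w$. This map is injective. Given $w$, the point is recovered as $q = p + \pi_V(w)$. It also equals $\pi_V(w_0 + w)$, and $\pi_U(w_0+w) = u_j$, so $q$ does lie in translate $j$.

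Different $w$ give different $q$. Two such $w$ share the same $\pi_U$, namely $u_j - u_i$. If they also had the same $\pi_V$, they would be equal.

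Conversely, every $w \in \Lambda$ with $|w|^2 = \mu$ and $\pi_U(w) = u_j-u_i$ gives such a point. Translates with distinct $u_j$ are disjoint, because a point of $\mathcal{A}(S)$ determines $\pi_U$ of its lift only up to $K$. More simply, we are counting pairs $(j, w)$, and these are distinct because $u_j$ is determined by $\pi_U(w)$.

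Summing over $j$ gives $N_i$. The count does not depend on which point $p$ of translate $i$ we started from, since it depends only on $u_i$. The kissing number is therefore $\max_i N_i$.
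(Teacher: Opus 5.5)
Your proof is correct and is essentially the paper's: the identity $|\pi_V(w)|^2=|w|^2-|u_j-u_i|^2$ for $w=w_1-w_0$, the correspondence between centers and fibre vectors, and equality exactly when $|w|^2=\mu$ and $|u_j-u_i|^2=\beta$; your $\beta=0$ caveat matches the paper's separate treatment of the bare case in Remark~\ref{rem:bare}. The one step you justify loosely is the disjointness of different translates, since counting pairs $(j,w)$ does not by itself show that distinct pairs give distinct spheres; it follows from the same identity, because any nonzero $w\in\Lambda$ with $\pi_U(w)=u_j-u_k$ has $|\pi_V(w)|^2\ge\mu-\beta>0$, so distinct lifts give distinct centers.
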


\begin{proof}
  Fix a center $\pi_V(w_0)$ with $\pi_U(w_0) = u_i$. Every other center is
  $\pi_V(w)$ with $\pi_U(w) = u_j$ for some $j$, and $w \mapsto w - w_0$ is a
  bijection between such centers and the vectors $v \in \Lambda$ with
  $\pi_U(v) = u_j - u_i$ (for $j = i$, excluding $v = 0$). The squared distance
  of the two centers is
  \[
    |\pi_V(w - w_0)|^2 \;=\; |w - w_0|^2 - |u_j - u_i|^2 \;\ge\; \mu - \beta
    \qquad (w \ne w_0),
  \]
  since $|w - w_0|^2 \ge \mu$ and $|u_j - u_i|^2 \le \beta$ (for $j = i$ the
  bound is $\mu > \mu - \beta$). Equality holds if and only if $w - w_0$ is a
  minimal vector of $\Lambda$ and $|u_j - u_i|^2 = \beta$; in this case the
  two spheres of radius $\tfrac12\sqrt{\mu-\beta}$ are tangent. Summing over
  $j$ gives $N_i$, which does not depend on the choice of $w_0$.
\end{proof}

\begin{remark}
  \label{rem:bare}
  For $s = 1$ the packing $\mathcal{A}(S)$ is the lattice packing of $L$
  itself, the contact distance is $\mu$, and the same argument gives
  $N_0 = \#\{\, w \in \Lambda : |w|^2 = \mu,\ \pi_U(w) = 0 \,\}$: the kissing
  number of a bare cross-section packing is the number of minimal vectors of
  $\Lambda$ that lie in $L$. Section~\ref{sec:kissing} uses this count.
\end{remark}

A convenient sufficient condition for a large cluster, which our
dimension-$43$ packing satisfies, takes $S$ to be the origin together with an
entire basis of $M$:

\begin{theorem}[Chen, Hu, Li, Wang and Wu \cite{chen2025antipode}]
  \label{th:simplex}
  Let $\Lambda$ be an $n$-dimensional unimodular lattice of minimum $\mu$,
  let $K$ be a saturated cross-section of rank $k$ with Gram matrix
  $\mathbf{K}$, and put $\mathbf{M} = \mathbf{K}^{-1}$. If
  $0 < \beta < \mu$ satisfies
  \[
    \mathbf{M}_{i,i} \le \beta
    \qquad\text{and}\qquad
    \mathbf{M}_{i,i} + \mathbf{M}_{j,j} - 2\mathbf{M}_{i,j} \le \beta
    \qquad (1 \le i < j \le k),
  \]
  then there is an $(n-k)$-dimensional packing of center density
  $(k+1)\sqrt{\det \mathbf{M}}\,((\mu-\beta)/4)^{(n-k)/2}$.
\end{theorem}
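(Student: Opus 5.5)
The plan is to take $S = \{0, u_1, \ldots, u_k\} \subset M = K^{*}$, where $u_1,\ldots,u_k$ is the basis of $K^{*}$ dual to the basis of $K$ whose Gram matrix is $\mathbf{K}$, and then apply Theorem~\ref{th:antipode} with $s = k+1$. By Lemma~\ref{lem:projdual}, $\pi_U(\Lambda) = K^{*} = M$, so every $u_i$ really lies in the projection and $\mathcal{A}(S)$ is a genuine union of $k+1$ translates of $L$. The dual basis has Gram matrix $\mathbf{K}^{-1} = \mathbf{M}$: if $b_1,\dots,b_k$ is the basis of $K$, then $u_i = \sum_j (\mathbf{K}^{-1})_{ij} b_j$ and $\langle u_i,u_j\rangle = (\mathbf{K}^{-1})_{ij}$.

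Next I check the cluster condition. The squared distance from $0$ to $u_i$ is $|u_i|^2 = \mathbf{M}_{i,i} \le \beta$. The squared distance between $u_i$ and $u_j$ is $|u_i-u_j|^2 = \mathbf{M}_{i,i}+\mathbf{M}_{j,j}-2\mathbf{M}_{i,j} \le \beta$. These are exactly the two hypotheses, so all pairwise squared distances in $S$ are at most $\beta<\mu$. The $k+1$ points are distinct because the $u_i$ are linearly independent and nonzero, so $s = k+1$.

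Theorem~\ref{th:antipode} then gives an $(n-k)$-dimensional packing with spheres of radius $\rho = \tfrac12\sqrt{\mu-\beta}$ and center density $(k+1)\rho^{\,n-k}/\sqrt{\det L}$. For the determinant, \eqref{eq:detequal} gives $\det L = \det K = \det\mathbf{K} = 1/\det\mathbf{M}$; this uses that $K$ is saturated and $\Lambda$ unimodular. Substituting gives $(k+1)\sqrt{\det\mathbf{M}}\,((\mu-\beta)/4)^{(n-k)/2}$, as claimed.

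There is no real obstacle. The only points needing care are that $L = \Lambda\cap V$ with $V = U^{\perp}$ has rank $n-k$, which follows from saturation of $K$ and the discriminant identification, and the convention that $\mathbf{M}$ is the Gram matrix of the dual basis, i.e.\ the inverse Gram matrix. Both are routine.
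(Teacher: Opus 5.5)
Your proof is correct and follows the paper's own route: the paper takes $S$ to be the origin together with the basis $m_1,\dots,m_k$ of $M=K^{*}$ dual to the chosen basis of $K$ (Gram matrix $\mathbf{M}$), the hypotheses are exactly the pairwise distance bounds on $S$, and Theorem~\ref{th:antipode} with $s=k+1$ and $\det L=\det K=1/\det\mathbf{M}$ gives the stated density. One small correction to your closing remark: that $L$ has rank $n-k$ does not depend on saturation of $K$, which is needed only for $\pi_U(\Lambda)=K^{*}$ and $\det L=\det K$; it holds because $\Lambda$ is integral, so $U^{\perp}$ is cut out by integer linear equations in lattice coordinates and is therefore a rational subspace.
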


Three center densities appear below. Write $\delta_d$ for the largest center
density of a packing in dimension $d$, and $\delta^{\mathrm{lat}}_{d}$ for the
largest center density of a lattice packing, each taken over all packings
whether or not one is known; so
$\delta^{\mathrm{lat}}_{d} \le \delta_d$, and
$\delta^{\mathrm{lat}}_{d} = (\gamma_d/4)^{d/2}$ with $\gamma_d$ the Hermite
constant. A cross-section is a lattice, so what bounds it is
$\delta^{\mathrm{lat}}$, not $\delta$, and the distinction is not vacuous:
among the ranks used below, the densest packing known in dimension $10$ is not
a lattice \cite{cohn2026packing}. Neither optimum is known in every rank we
need, so write $\underline{\delta}^{\mathrm{lat}}_{d}$ for the largest center
density of a lattice packing that is \emph{known}, tabulated by Nebe and
Sloane~\cite{nebe2026density}; then
$\underline{\delta}^{\mathrm{lat}}_{d} \le \delta^{\mathrm{lat}}_{d}$, with
equality exactly when the densest rank-$d$ lattice has been found.

\subsection{The ceiling of a bare cross-section}
\label{sec:ceiling}

A cross-section of $\Lambda$ is a lattice of minimum at least $\mu = 6$, and
that alone limits how dense its complement can be. The argument is that of
Conway and Sloane, who observed that the densest $k$-dimensional and
$(48-k)$-dimensional sections of a unimodular lattice have the same
determinant \cite[Corollary~5]{conway1982laminated}; with the classification
of the densest lattices in dimensions at most $8$
\cite[Theorem~2]{conway1982laminated} this bounds one in terms of the other.
They drew the conclusion in dimensions $43$ and above, over even unimodular
ambients \cite[Corollary~8]{conway1982laminated}; we need it below dimension
$43$ as well, where their own sections do not attain it
(Corollary~\ref{cor:optimal}), and over every unimodular ambient.

\begin{proposition}
  \label{prop:ceiling}
  Let $K$ be a cross-section of rank $k$ of a unimodular $48$-dimensional
  lattice of minimum $6$, and let $\gamma_k$ be the Hermite constant. Then
  \[
    \det K \;\ge\; \left(\frac{6}{\gamma_k}\right)^{k},
  \]
  and consequently the bare cross-section packing~\eqref{eq:bare} of its
  complement has
  \begin{equation}
    \label{eq:ceiling}
    \delta_{\mathrm{bare}}
    \;\le\;
    \delta^{\mathrm{lat}}_{k}\,(3/2)^{24-k}.
  \end{equation}
\end{proposition}

\begin{proof}
  $K$ is a rank-$k$ lattice of minimum $m \ge 6$, so
  $6/(\det K)^{1/k} \le m/(\det K)^{1/k} \le \gamma_k$ by the definition of
  the Hermite constant, which is the determinant inequality. Substituting it
  into~\eqref{eq:bare} gives the density.
\end{proof}

The bound depends on neither the choice of $48$-dimensional lattice nor
extremality: the same argument over an ambient of minimum $\mu \le 6$ only
lowers the right-hand side, and an odd unimodular lattice of rank $48$ also
has minimum at most $6$ by the shadow bound of Rains and
Sloane~\cite{rains1998shadow}, so the ceiling holds over every unimodular
ambient of rank $48$.

The densest lattice packing has been established in the published literature
in every dimension up to $8$, so for $k\le8$ the bound is effective:
$\gamma_5^5=8$, $\gamma_6^6=64/3$, $\gamma_7^7=64$ and
$\gamma_8^8=256$ \cite{korkine1877formes,blichfeldt1935minimum}, attained
by $D_5$, $E_6$, $E_7$ and $E_8$, unique up to similarity
\cite[Ch.~6]{martinet2003perfect}.\footnote{A recent preprint of Dutour
  Sikiri\'c and van Woerden~\cite{dutoursikiric2025dimension9} reports a
  complete Voronoi enumeration in dimension $9$ and concludes that $\Lambda_9$
  is optimal, equivalently $\gamma_9=2$. If this computation is correct, then
  the $k=9$ row of Table~\ref{tab:ceiling} is also unconditional:
  $\det K\ge3^9=19683$, and the possible bare-improvement window in dimension
  $39$ is exactly $19683\le\det K<21870$. Because this result is available only
  as a preprint at the time of writing, we do not use it as an established
theorem in the present paper.}
Those are the ranks at which this paper uses published optimality results.
Section~\ref{sec:clusters} evaluates the bound rank by rank.

\section{A chain of optimal extensions}
\label{sec:chain}

\subsection{The seed: an order-three fixed lattice}
\label{sec:seed}

Section~\ref{sec:ceiling} says what to look for in ranks $6$, $7$, $8$: a
section isometric to $\sqrt3E_6$, $\sqrt3E_7$ or $\sqrt3E_8$ would attain the
ceiling exactly. Symmetry produces all three at once.

Let $\Lambda = P_{48p}$, the extremal even unimodular lattice built from the
Pless symmetry code \cite{pless1972symmetry, conway1999sphere}, so
$\det \Lambda = 1$ and $\mu(\Lambda) = 6$. For an automorphism $\sigma$ of
$\Lambda$ let $\operatorname{Fix}_\Lambda(\sigma) = \Lambda \cap
\ker(\sigma - 1)$.

\begin{proposition}[Nebe \cite{nebe2013automorphisms}]
  \label{prop:nebe}
  There is an automorphism $\sigma \in \operatorname{Aut}(P_{48p})$ of order
  $3$ with
  $\operatorname{Fix}_{P_{48p}}(\sigma) \cong \sqrt3(E_8 \perp E_8)$.
\end{proposition}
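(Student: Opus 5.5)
The proof will be an explicit construction followed by a finite verification. Nebe's analysis of automorphisms of extremal lattices suggests this route, but here we want a self-contained check. Write $P_{48p}$ in its standard Construction~A form from the Pless symmetry code of length $48$ over $\mathbb{F}_3$:
\[
  P_{48p} \;=\; \tfrac{1}{\sqrt3}\,\{x\in\mathbb{Z}^{48} : x \bmod 3 \in C\},
\]
with $C$ the self-dual ternary code of minimum weight $15$. The symmetry code is built from Paley matrices over $\mathbb{F}_{47}$ and carries a monomial automorphism group containing $\mathrm{PSL}_2(47)$ together with sign changes. Any monomial automorphism of $C$, that is, a permutation combined with sign changes, induces an isometry of $P_{48p}$. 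So I would look for $\sigma$ among elements of order $3$ in this monomial group. The natural candidate is a permutation of the $48$ coordinates consisting of sixteen $3$-cycles with no fixed points.

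For such a $\sigma$, the fixed space $\ker(\sigma-1)$ is spanned by the sixteen orbit-sum vectors, so it has dimension $16$. I would compute $F=\operatorname{Fix}_{P_{48p}}(\sigma)$ directly as follows. A vector of $\tfrac1{\sqrt3}\mathbb{Z}^{48}$ that is constant on each orbit has the form $\tfrac1{\sqrt3}\sum_i a_i(e_{i}+e_{i'}+e_{i''})$. Rescaling by $1/\sqrt3$ and using orbit coordinates, the norm becomes $\sum a_i^2$. The condition on $F$ is then that $(a_i)$ reduces mod $3$ into the fixed subcode $C^\sigma$, read on orbits as a length-$16$ ternary code $D$. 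Hence $\tfrac1{\sqrt3}F \cong \{a\in\mathbb{Z}^{16}: a \bmod 3\in D\}$.

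The target $\sqrt3(E_8\perp E_8)$ has determinant $3^{16}$. Using the orbit description, $\det F = 3^{16}\cdot 3^{-2\dim D}\cdot 3^{16}$, up to the bookkeeping of the scaling. So I need $\dim D = 8$. I would verify this by checking that $D$ is self-dual for the induced form. That fits the fact that $\sqrt3 F^{*}\cdot$ scaling gives a unimodular lattice, and $E_8\perp E_8$ itself corresponds to $F/\sqrt3$ being even unimodular in rank $16$. The cleaner target is to show that $F/\sqrt3$ is an even unimodular lattice of rank $16$ with root system $E_8^2$ rather than $D_{16}$. Evenness follows because $F\subset P_{48p}$ is even with norms divisible by $3$ on fixed vectors: for $x$ fixed, $|x|^2 = 3|\text{orbit vector}|^2$-type counting gives $|x|^2\in 6\mathbb{Z}$. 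Unimodularity follows from the determinant count.

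The main obstacle is distinguishing $E_8\perp E_8$ from $D_{16}^+$, since both are even unimodular of rank $16$. Both have $480$ roots, so the count alone does not separate them. Instead I would compute the norm-$6$ vectors of $F$ (norm $2$ after scaling) and check that they split into two mutually orthogonal classes of $240$, giving root system $E_8+E_8$ rather than $D_{16}$. Equivalently, I would exhibit $D$ as $e_8\oplus e_8$ in suitable coordinates, since the relevant ternary structure decomposes over the two halves. In practice this is a finite computer check with explicit coordinates for the chosen $\sigma$. The real difficulty is finding the right order-$3$ element: not every fixed-point-free one need give $E_8^2$, and one with fixed coordinates would change the rank. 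I would therefore enumerate the conjugacy classes of order-$3$ elements of $\operatorname{Aut}(C)$ and test each.
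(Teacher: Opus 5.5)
\paragraph{The approach matches the paper's verification, but it rests on the wrong lattice.} Your plan is to take an explicit order-three signed coordinate permutation of the ternary code, compute its fixed sublattice as an integer kernel, and separate $E_8\perp E_8$ from $D_{16}^{+}$ by splitting the $480$ norm-$6$ vectors into two orthogonal systems of $240$. That is exactly the check the paper runs: it takes existence from the proof of Nebe's Theorem~3.8 and verifies it this way. The model of $P_{48p}$ underneath your computation, however, is wrong, and the computation fails as you set it up.

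The lattice $\tfrac{1}{\sqrt3}\{x\in\mathbb{Z}^{48} : x \bmod 3\in C\}$ is \emph{odd} unimodular of minimum $3$, since it contains $\sqrt3\,e_i$. It is not $P_{48p}$. In your orbit model it contains the $\sigma$-fixed vector $\tfrac{1}{\sqrt3}\cdot 3(e_i+e_{i'}+e_{i''})$ of norm $9$, namely $a=3e_j\in\{a\in\mathbb{Z}^{16} : a\bmod 3\in D\}$. So for \emph{every} fixed-point-free $\sigma$, the lattice you would compute is odd and cannot be $\sqrt3(E_8\perp E_8)$, all of whose norms lie in $6\mathbb{Z}$.

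$P_{48p}$ is instead an even unimodular neighbour of the Construction~A lattice. In the Leech--Sloane frame the paper uses (integer coordinates, form $\tfrac13\,x\cdot y$, containing $(\pm3,\pm3,0^{46})$ and $(-2,1^{14},0^{33})$ but not $(3,0^{47})$), it consists of two pieces:
the vectors of $\{x\in\mathbb{Z}^{48}: x\bmod 3\in C\}$ with even coordinate sum; and
one of the two cosets of these lying in the shadow $\tfrac32\mathbf{1}+\{x\in\mathbb{Z}^{48}: x\bmod 3\in C\}$, whose vectors have half-integral coordinates.
Your orbit description of $F$ therefore needs a parity condition and a half-integral glue coset. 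Relatedly, a monomial automorphism of $C$ preserves the even part but may interchange the two even neighbours. An element of order $3$ cannot do so, so your $\sigma$ is safe, but the blanket claim that every monomial automorphism of $C$ induces an isometry of $P_{48p}$ is false as stated.

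\paragraph{The structural shortcuts also need repair.} Orbit counting does not give $|x|^2\in 6\mathbb{Z}$. In the correct frame a fixed vector $\sum_j a_j\mathbf{1}_{O_j}$ has norm $\sum_j a_j^2$, with no factor $3$. Likewise, the ambient form on orbit-constant codewords is $3\sum_j a_ja_j'\equiv 0 \pmod 3$. So self-duality of $C$ passes to $D$ only as $D^{\perp}\subseteq D$, and $\dim D=8$ has to be checked.

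The correct chain runs as follows. Since $\pi_F=\tfrac13(1+\sigma+\sigma^2)$, Lemma~\ref{lem:projdual} gives $F^{*}=\pi_F(\Lambda)$, hence $3F^{*}\subseteq F$ always. Therefore $\det F=3^{16}$, once computed, forces $F=3F^{*}$, i.e.\ every inner product in $F$ is divisible by $3$. Then $\tfrac{1}{\sqrt3}F$ is even unimodular, because $F$ is even. Divisibility by $3$ is thus a property of the particular $\sigma$, not a consequence of its cycle type.

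\paragraph{The group is misidentified.} The length-$48$ Pless code is the symmetry code $C(23)$, built from the Paley matrix over $\mathbb{F}_{23}$, with a monomial action of $\mathrm{PSL}_2(23)$. The field $\mathbb{F}_{47}$ and the group $\mathrm{PSL}_2(47)$ belong to the extended quadratic residue code and to $P_{48q}$. The paper's element is $x\mapsto -1/(x+1)$ in $\mathrm{PSL}_2(23)$. It sends $\infty\mapsto0$, and a finite fixed point would solve $x^2+x+1=0$, whose discriminant $-3$ is a nonsquare mod $23$. So it acts with sixteen $3$-cycles on the $48$ coordinates. With the even frame and this element, your final root-system test is exactly the paper's verification.
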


This is the construction-specific input from the literature: the proof of
Theorem~3.8 of Nebe~\cite{nebe2013automorphisms} exhibits an order-three
$\sigma$ with $\operatorname{Fix}(\sigma) \cong \sqrt3(E_8\perp E_8)$ in
$P_{48p}$, and Proposition~4.8 there classifies the order-three automorphisms
of extremal $48$-dimensional lattices. Here
$\sqrt3R$ means the lattice $R$ with its bilinear form multiplied by $3$, so
$\det(\sqrt3R) = 3^{\operatorname{rank}R}\det R$ and the minimum of $\sqrt3R$
is three times that of $R$.

The two lemmas below are standard, and are proved only to keep the
determinants self-contained: invariant lattices are saturated
\cite[\S2]{hohn2016fixed} (see \cite{nikulin1980integral} for saturated
sublattices in general), and a subsystem of simple roots spans a saturated
sublattice. It is their conjunction with Proposition~\ref{prop:nebe} that
matters, putting $\sqrt3E_6$, $\sqrt3E_7$ and $\sqrt3E_8$ inside an extremal
$48$-dimensional lattice, where~\eqref{eq:bare} turns them into packings.

\begin{lemma}
  \label{lem:fixedprim}
  $F = \operatorname{Fix}_\Lambda(\sigma)$ is saturated in $\Lambda$.
\end{lemma}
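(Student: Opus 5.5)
The plan is to verify saturation directly from the definition: $\Lambda/F$ is torsion-free. Equivalently, we show that $F = \Lambda \cap \operatorname{span}(F)$. This is the intersection form noted in Section~\ref{sec:sections}, where a sublattice is saturated exactly when it equals $\Lambda$ intersected with its own span. We may use the intersection form here because $F$ is, by definition, $\Lambda$ intersected with the subspace $\ker(\sigma-1)$.

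Concretely, take $x \in \Lambda$ and a positive integer $m$ with $mx \in F$. Then $\sigma(mx) = mx$, so $m(\sigma x - x) = 0$ in the real vector space $\mathbb{R}^{48}$. Since $m \neq 0$, this forces $\sigma x = x$, hence $x \in \Lambda \cap \ker(\sigma - 1) = F$. So no nonzero element of $\Lambda/F$ has finite order, and $\Lambda/F$ is torsion-free.

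For completeness I would add one sentence relating this to the span form. The span of $F$ is contained in $\ker(\sigma-1)$, so
\[
\Lambda \cap \operatorname{span}(F) \subseteq \Lambda \cap \ker(\sigma-1) = F,
\]
and the reverse inclusion is trivial. Nothing about the order of $\sigma$, unimodularity, or Proposition~\ref{prop:nebe} is needed; the argument works for any linear map preserving $\Lambda$, since $\ker(\sigma-1)$ is a real subspace. I expect no real obstacle here. The only care needed is to state that the torsion argument takes place in the torsion-free ambient $\mathbb{R}^{48}$, so that cancelling $m$ is legitimate.
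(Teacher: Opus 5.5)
Your proof is correct and is essentially the paper's. The paper also notes that $F$ is $\Lambda$ intersected with the linear subspace $\ker(\sigma-1)$, and then gives the same direct argument: $m(\sigma x - x) = 0$ forces $\sigma x = x$ by torsion-freeness.
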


\begin{proof}
  $F = \Lambda \cap \ker(\sigma - \operatorname{id})$ is the intersection
  of $\Lambda$ with a linear subspace, hence saturated by
  Section~\ref{sec:sections}. Directly: if $x \in \Lambda$ and $mx \in F$
  for some nonzero integer $m$, then $m(\sigma x - x) = \sigma(mx) - mx = 0$;
  a lattice is torsion-free, so $\sigma x = x$ and $x \in F$.
\end{proof}

\begin{lemma}
  \label{lem:nested}
  $\Lambda$ contains saturated sublattices
  $K_6 \cong \sqrt3E_6 \subset K_7 \cong \sqrt3E_7 \subset K_8 \cong
  \sqrt3E_8$, of determinants $3^7 = 2187$, $2\cdot3^7 = 4374$ and
  $3^8 = 6561$.
\end{lemma}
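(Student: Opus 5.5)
We take $F=\operatorname{Fix}_\Lambda(\sigma)\cong\sqrt3(E_8\perp E_8)$ from Proposition~\ref{prop:nebe}, which is saturated in $\Lambda$ by Lemma~\ref{lem:fixedprim}, and work entirely inside one of its two orthogonal summands, $F_1\cong\sqrt3E_8$. The plan is to show three things in order: $F_1$ is saturated in $F$; the standard sublattices $E_6\subset E_7\subset E_8$ spanned by sub-diagrams of simple roots are saturated in $E_8$; and saturation is transitive. The determinants then follow from $\det(\sqrt3R)=3^{\operatorname{rank}R}\det R$.

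\textbf{Step 1 (saturation of $F_1$ in $F$).} An orthogonal summand of a lattice is saturated. If $x=x_1+x_2\in F$ with $x_i$ in the two summands and $mx\in F_1$, then $mx_2=0$, so $x_2=0$ and $x\in F_1$. Equivalently, $F_1=F\cap\operatorname{span}(F_1)$.

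\textbf{Step 2 (root sublattices).} Fix simple roots $\alpha_1,\dots,\alpha_8$ of $E_8$, which form a $\mathbb Z$-basis of $E_8$. Number them so that $\{\alpha_1,\dots,\alpha_6\}$ and $\{\alpha_1,\dots,\alpha_7\}$ are the $E_6$ and $E_7$ sub-diagrams, obtained by deleting end nodes of the long arm. The sublattice spanned by a subset of a $\mathbb Z$-basis is a direct summand, since the quotient is free on the remaining basis vectors, and is therefore saturated. Its Gram matrix is the corresponding Cartan submatrix, so these sublattices are isometric to $E_6$ and $E_7$. Scaling by $\sqrt3$ preserves all of this. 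We get $K_6\subset K_7\subset K_8:=F_1$, with $K_6\cong\sqrt3E_6$ and $K_7\cong\sqrt3E_7$, each saturated in $K_8$.

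\textbf{Step 3 (transitivity).} If $A\subseteq B\subseteq C$ with $A$ saturated in $B$ and $B$ saturated in $C$, then $A$ is saturated in $C$. Indeed, if $x\in C$ and $mx\in A\subseteq B$, then $x\in B$ by saturation of $B$, and then $x\in A$ by saturation of $A$. Applying this along $K_j\subseteq K_8\subseteq F\subseteq\Lambda$ shows each $K_j$ is saturated in $\Lambda$.

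\textbf{Step 4 (determinants).} Using $\det E_6=3$, $\det E_7=2$ and $\det E_8=1$:
\[
\det K_6=3^6\cdot3=3^7,\qquad \det K_7=3^7\cdot2,\qquad \det K_8=3^8.
\]

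Nothing here is deep. The only point needing care is Step~2: the chosen node subsets must really give the $E_6$ and $E_7$ Dynkin diagrams, and saturation must be argued via a basis subset rather than merely via the root system. The substantive input is Proposition~\ref{prop:nebe}, which we assume.
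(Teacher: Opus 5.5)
Your proof is correct and follows the paper's argument step for step. The shared steps are: $F_1$ is a direct summand of $F$; a simple-root basis of $F_1$ with nested $E_6\subset E_7\subset E_8$ sub-diagrams gives saturated $\sqrt3E_6\subset\sqrt3E_7$; transitivity of saturation combined with Lemma~\ref{lem:fixedprim} lifts this to $\Lambda$; and the determinants come from $\det E_6=3$, $\det E_7=2$, $\det E_8=1$. You also make explicit two details the paper leaves implicit, namely why an orthogonal summand is saturated and which end nodes of the long arm to delete.
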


\begin{proof}
  Write $F = F_1 \perp F_2$ with $F_i \cong \sqrt3E_8$; each $F_i$ is a direct
  summand of the free abelian group $F$. Choose a simple-root basis of $F_1$
  containing the nested Dynkin diagrams $E_6 \subset E_7 \subset E_8$. A
  subset of a $\mathbb{Z}$-basis spans a saturated subgroup, so the scaled
  copies of $E_6$ and $E_7$ are saturated in $F_1$. Saturation is transitive,
  so Lemma~\ref{lem:fixedprim} gives all three saturated embeddings in
  $\Lambda$. Finally $\det E_6 = 3$, $\det E_7 = 2$ and $\det E_8 = 1$.
\end{proof}

Applying~\eqref{eq:bare} to $K_6$, $K_7$, $K_8$ gives the first three
records of Table~\ref{tab:results}:
\[
  \delta_{42} \ge \frac{(3/2)^{21}}{\sqrt{3^7}} = \frac{3^{35/2}}{2^{21}},
  \qquad
  \delta_{41} \ge \frac{(3/2)^{41/2}}{\sqrt{2\cdot3^7}} = \frac{3^{17}}{2^{21}},
  \qquad
  \delta_{40} \ge \frac{(3/2)^{20}}{\sqrt{3^8}} = \frac{3^{16}}{2^{20}} .
\]
Each is exactly the ceiling~\eqref{eq:ceiling} of its rank, so by
Proposition~\ref{prop:ceiling}:

\begin{corollary}
  \label{cor:optimal}
  The packings of dimensions $42$, $41$ and $40$ above are optimal among bare
  cross-section packings: no bare cross-section packing~\eqref{eq:bare} of
  any unimodular $48$-dimensional lattice has larger center density in the
  same dimension. Equivalently, $K_6$, $K_7$ and $K_8$ have the smallest
  determinant among all cross-sections of rank $6$, $7$ and $8$ of such
  lattices, attaining the floor $(6/\gamma_k)^{k}$ of
  Proposition~\ref{prop:ceiling}, and the sections attaining it are unique
  up to isometry.
\end{corollary}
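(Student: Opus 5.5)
The plan is to read the corollary straight off Proposition~\ref{prop:ceiling}, using the exact values $\gamma_6^6=64/3$, $\gamma_7^7=64$, $\gamma_8^8=256$ quoted after it.

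\textbf{The determinant floor.} For $k=6,7,8$ I will evaluate $(6/\gamma_k)^k = 6^k/\gamma_k^k$:
\[
6^6\cdot\tfrac{3}{64}=3^7=2187,\qquad 6^7/64=2\cdot3^7=4374,\qquad 6^8/256=3^8=6561.
\]
These are exactly the determinants of $K_6$, $K_7$, $K_8$ from Lemma~\ref{lem:nested}. Proposition~\ref{prop:ceiling} bounds $\det K$ from below for every rank-$k$ cross-section of every unimodular rank-$48$ lattice of minimum $6$. As noted after that proposition, this extends to ambients of smaller minimum, and odd ambients have minimum at most $6$ by the shadow bound. Hence $K_6$, $K_7$, $K_8$ attain the floor, so they have the smallest possible determinant.

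\textbf{The density statement.} By~\eqref{eq:bare}, $\delta_{\mathrm{bare}}$ is a decreasing function of $\det K$ once the rank is fixed. So minimality of the determinant is equivalent to maximality of the bare density, which gives the stated equivalence. I will also check directly that the three displayed densities equal $\delta^{\mathrm{lat}}_k(3/2)^{24-k}$, where $\delta^{\mathrm{lat}}_k=(\gamma_k/4)^{k/2}$.

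\textbf{Uniqueness.} This is the only step needing more than arithmetic, and I expect it to be the main point to get right. Suppose some section $K$ attains the floor. Then $6\le\min K$, and also $\min K/(\det K)^{1/k}\le\gamma_k$ with $(\det K)^{1/k}=6/\gamma_k$. Together these force $\min K=6$ and $\min K/(\det K)^{1/k}=\gamma_k$, so $K$ is a critical (densest) lattice in its rank. For $k\le 8$ the densest lattice is unique up to similarity, namely $E_6$, $E_7$, $E_8$ \cite[Ch.~6]{martinet2003perfect}. Since $E_k$ has minimum $2$ and $K$ has minimum $6$, the similarity must scale norms by $3$. Therefore $K\cong\sqrt3E_k$, which is the claimed uniqueness up to isometry.
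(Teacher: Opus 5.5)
Your plan is the paper's own. You compute $(6/\gamma_k)^k = 2187, 4374, 6561$ and match these with $\det K_6,\det K_7,\det K_8$, so the three bare densities sit exactly at the ceiling~\eqref{eq:ceiling}. You take uniqueness from the uniqueness of $E_6,E_7,E_8$ up to similarity; your check that equality forces $\min K=6$, hence scale factor $3$, makes explicit what the paper leaves to the citation.

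One step is wrong as written: the \emph{determinant} floor does not extend to ambients of minimum $\mu<6$. There a section is only known to have minimum $\ge\mu$, so you get only $\det K\ge(\mu/\gamma_k)^k$, and smaller determinants really occur. For example, $\mathbb{Z}^{48}$ is unimodular and contains the saturated section $\mathbb{Z}^6$ of determinant $1<2187$. Your next step inherits the problem. Equation~\eqref{eq:bare} has $\mu=6$ built in, so ``$\delta_{\mathrm{bare}}$ decreases in $\det K$'' cannot compare bare packings of ambients with different minima.

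What the remark after Proposition~\ref{prop:ceiling} extends is the \emph{density} bound, and that is the version you need. Over an ambient of minimum $\mu$, the bare packing (radius $\sqrt{\mu}/2$) has
\[
  \delta_{\mathrm{bare}} \;=\; \frac{(\mu/4)^{(48-k)/2}}{\sqrt{\det K}}
  \;\le\; \Bigl(\frac{\mu}{4}\Bigr)^{(48-k)/2}\Bigl(\frac{\gamma_k}{\mu}\Bigr)^{k/2}
  \;=\; \frac{\gamma_k^{k/2}\,\mu^{24-k}}{2^{48-k}},
\]
which is strictly increasing in $\mu$ for $k\le 8$. Every unimodular rank-$48$ lattice has minimum at most $6$ (the extremal bound if even, the shadow bound if odd). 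Hence the $\mu=6$ ceiling dominates all of them, and equality is possible only when $\mu=6$.

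So read the determinant clause, and its equivalence with density maximality, over ambients of minimum exactly $6$, as in Proposition~\ref{prop:ceiling}. There your argument, including the uniqueness step, is correct as it stands.
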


Conway and Sloane claimed optimality only from dimension $43$ upwards
\cite[Corollary~8]{conway1982laminated}. Their sections in dimensions $42$,
$41$ and $40$ are $\sqrt3D_6$, $\sqrt3D_7$ and $S_8$, of determinants
$4\cdot3^6$, $4\cdot3^7$ and $32\cdot3^6$, none of which meets
Proposition~\ref{prop:ceiling}: what was missing was not the bound but a
section attaining it. Corollary~\ref{cor:optimal} extends their assertion down
to dimension $40$.

In the basis of simple roots the three Gram matrices are the leading blocks
of
\[
  3\,C(E_8) \;=\;
  \begin{bmatrix}
    6 &  0 & -3 &  0 &  0 &  0 &  0 &  0 \\
    0 &  6 &  0 & -3 &  0 &  0 &  0 &  0 \\
    -3 &  0 &  6 & -3 &  0 &  0 &  0 &  0 \\
    0 & -3 & -3 &  6 & -3 &  0 &  0 &  0 \\
    0 &  0 &  0 & -3 &  6 & -3 &  0 &  0 \\
    0 &  0 &  0 &  0 & -3 &  6 & -3 &  0 \\
    0 &  0 &  0 &  0 &  0 & -3 &  6 & -3 \\
    0 &  0 &  0 &  0 &  0 &  0 & -3 &  6
  \end{bmatrix},
\]
$C(E_8)$ being the Cartan matrix; the leading $6\times6$ and $7\times7$
blocks are $3C(E_6)$ and $3C(E_7)$, of determinants $2187$ and $4374$.

\begin{remark}
  \label{rem:ambients}
  The construction is not specific to $P_{48p}$. Realizing $\sigma$ as a
  signed coordinate permutation of the ternary code frame (for $P_{48p}$
    the element $x \mapsto -1/(x+1)$ of $\mathrm{PSL}_2(23)$ acting on the
    Pless symmetry code, for $P_{48q}$ the corresponding element of
  $\mathrm{PSL}_2(47)$ on the extended quadratic residue code) and, for
  $P_{48n}$, as the product of three generators of the automorphism group
  listed in the catalogue entry \texttt{CQ48a} \cite{nebe2026catalogue}, we
  obtain the same fixed lattice $\sqrt3(E_8\perp E_8)$, hence the same
  sections, in three of the four known extremal lattices. Whether the fourth,
  $P_{48m}$, has such a section we do not know. An order-three action on
  $\mathbb{R}^{48}$ with $f$ trivial and $t$ faithful two-dimensional summands
  has $48 = f + 2t$ and trace $f - t$, so a fixed lattice of rank $16$ requires
  trace $0$; an unverified enumeration of order-three elements generated by
  the catalogue's generators for $\operatorname{Aut}(P_{48m})$ produced only
  trace $-24$, that is, only fixed-point-free elements, but we have not
  certified that computation and nothing in this paper depends on it.
\end{remark}

\subsection{Extension and the glue radius}
\label{sec:growing}

We now push past rank $8$. The determinant of a one-vector extension is
governed by a projection, and integrality restricts that projection to a dual-lattice coset.

\begin{proposition}
  \label{prop:extend}
  Let $K = \Lambda \cap U$ be a saturated cross-section of rank $k$ of a
  unimodular lattice $\Lambda$, and $L = \Lambda \cap V$ with $V = U^{\perp}$.
  \begin{enumerate}
    \item Every saturated cross-section of rank $k+1$ containing $K$ is
      $K + \mathbb{Z}v$ for some $v \in \Lambda$.
    \item $\det(K + \mathbb{Z}v) = \det K \cdot |\pi_V(v)|^2$.
    \item The smallest determinant of a rank-$(k+1)$ cross-section containing
      $K$ is $\det K \cdot \mu(L^{*})$, where $\mu$ denotes the minimum.
  \end{enumerate}
\end{proposition}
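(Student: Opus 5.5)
The plan is to prove the three parts in order: part 1 is a statement about free abelian groups, part 2 a Gram-determinant computation, and part 3 combines them with Lemma~\ref{lem:projdual}.

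\emph{Part 1.} Let $K'$ be a saturated cross-section of rank $k+1$ with $K \subseteq K'$. Since $K = \Lambda \cap U$ and $K' \subseteq \Lambda$, we have $K = K' \cap U$, so $K$ is saturated in $K'$. Hence $K'/K$ is torsion-free of rank $1$, i.e.\ isomorphic to $\mathbb{Z}$. Lift a generator to some $v \in K' \subseteq \Lambda$; then $K' = K + \mathbb{Z}v$.

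\emph{Part 2.} Write $v = \pi_U(v) + \pi_V(v)$ and pick a basis $b_1,\dots,b_k$ of $K$. Subtracting the projection $\pi_U(v)$, which lies in $\operatorname{span} K$, does not change the covolume. So the Gram determinant of $b_1,\dots,b_k,v$ equals that of $b_1,\dots,b_k,\pi_V(v)$, which is block diagonal and equals $\det K\cdot|\pi_V(v)|^2$.

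\emph{Part 3.} By parts 1 and 2, the rank-$(k+1)$ cross-sections containing $K$ all have the form $K + \mathbb{Z}v$, with determinant $\det K\cdot|\pi_V(v)|^2$, where $v$ ranges over $\Lambda \setminus U$. By Lemma~\ref{lem:projdual}, the set $\pi_V(\Lambda)$ is exactly $L^{*}$, and $v \notin U$ iff $\pi_V(v) \neq 0$. So the infimum is $\det K\cdot\mu(L^{*})$. It is attained: take $v$ with $\pi_V(v)$ a minimal vector $y$ of $L^{*}$.

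The main obstacle is showing that this minimizing $K+\mathbb{Z}v$ is actually a cross-section, i.e.\ saturated, equal to $\Lambda\cap U'$ with $U' = U + \mathbb{R}v$. The argument:
\begin{itemize}
\item Any $x \in \Lambda \cap U'$ has $\pi_V(x) = t y$ for some real $t$, with $ty \in L^{*}$.
\item Since $y$ is minimal in $L^{*}$, it is primitive there: if $y = m z$ with $z \in L^{*}$ and $m \ge 2$, then $z$ would be shorter. So $t \in \mathbb{Z}$.
\item Then $x - tv \in \Lambda \cap U = K$, so $x \in K + \mathbb{Z}v$.
\end{itemize}

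Conversely, for a general $v$, the saturation $\Lambda \cap (U+\mathbb{R}v)$ has determinant at most that of $K + \mathbb{Z}v$, so minimizing over all $v$ loses nothing.

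Part 3 also uses that $\det L^{*}$-related quantities are finite and positive and that $\Lambda$ is unimodular; both are supplied by the hypotheses via Lemma~\ref{lem:projdual}.
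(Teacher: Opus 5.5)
Your proof is correct and follows the paper's route. Part~1 goes via $K'/K\cong\mathbb{Z}$, part~2 is the same Gram-determinant computation (subtracting $\pi_U(v)$ is the Schur complement in disguise), and part~3 uses Lemma~\ref{lem:projdual}. The only local difference is how you show the minimizing $K+\mathbb{Z}v$ is saturated. You argue directly from the minimality of $y=\pi_V(v)$ in $L^{*}$; the cleanest form is that if $ty\in L^{*}$ with $t\notin\mathbb{Z}$, then $(t-\lfloor t\rfloor)y$ is a shorter nonzero vector of $L^{*}$. The paper instead observes that the saturation of a non-saturated minimizer would have strictly smaller determinant, which is impossible.
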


\begin{proof}
  (1) If $W \supseteq K$ is a saturated section of rank $k+1$ then $W/K$ is
  torsion-free of rank $1$, hence infinite cyclic; any preimage $v$ of a
  generator gives $W = K + \mathbb{Z}v$.
  (2) In a basis of $K$ followed by $v$, the Gram matrix of
  $K + \mathbb{Z}v$ has the Gram matrix of $K$ as its leading block and
  $|v|^2$ in the corner; its Schur complement is
  $|v|^2 - |\pi_U(v)|^2 = |\pi_V(v)|^2$.
  (3) By Lemma~\ref{lem:projdual}, $\pi_V$ maps $\Lambda$ onto $L^{*}$, so
  the values $|\pi_V(v)|^2$ for $v \in \Lambda$ with $\pi_V(v) \ne 0$ are
  exactly the nonzero squared norms of $L^{*}$, and the smallest is
  $\mu(L^{*})$. If the resulting $K + \mathbb{Z}v$ is not saturated, its
  saturation is a section of the same rank and strictly smaller determinant
  containing $K$, so the minimum is attained by a saturated section.
\end{proof}

Parts (2) and (3) are the cross-section form of a computation of Conway and
Sloane. Writing $\Lambda_n$ as a union of layers of a sublattice
$\Lambda_{n-1}$, their equation~(2) reads $X_n = \gamma_{n-1}X_{n-1}$, with
$X$ a determinant and $\gamma_{n-1}$ the minimum of the projection of
$\Lambda_n$ orthogonal to $\Lambda_{n-1}$ \cite[\S II]{conway1982laminated}.
They grow a lattice; we grow a section inside a fixed ambient, where
Lemma~\ref{lem:projdual} identifies the projection as $L^{*}$.

Part (3) replaces a search through $\Lambda$ by one shortest-vector
computation, but in the complement, of dimension $39$ or more. The next lemma
moves the question into rank $k$, where a finite enumeration settles it, and
removes the dependence on the ambient.

\begin{lemma}
  \label{lem:covering}
  Let $\Lambda$ be an integral lattice of minimum $\mu$, let $K\subseteq
  \Lambda$ have rank $k$ and span $U$, put $V=U^{\perp}$, and set
  \[
    \rho_{\mathrm{glue}}(K)
    \;=\;
    \max_{y\in K^{*}} \operatorname{dist}(y,K),
  \]
  a maximum over the finitely many classes of $K^{*}/K$. We call
  $\rho_{\mathrm{glue}}(K)$ the \emph{glue radius}: it is the largest norm of
  a minimal glue vector of $K$ \cite[Ch.~4]{conway1999sphere}. Then every
  $v\in\Lambda$ with $\pi_V(v)\ne0$ satisfies
  \[
    |\pi_V(v)|^2 \;\ge\; \mu-\rho^2_{\mathrm{glue}}(K).
  \]
  Moreover
  $\rho_{\mathrm{glue}}(K)\le\rho_{\mathrm{cov}}(K)$, where
  $\rho_{\mathrm{cov}}(K)$ is the ordinary covering radius of $K$.
\end{lemma}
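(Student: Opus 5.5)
The plan is to decompose $v\in\Lambda$ as $v=\pi_U(v)+\pi_V(v)$ and use integrality to show that $\pi_U(v)$ lies in $K^{*}$. For every $x\in K\subseteq\Lambda$ we have $\langle \pi_U(v),x\rangle=\langle v,x\rangle\in\mathbb{Z}$, because $\Lambda$ is integral and $x\perp V$. Hence $y:=\pi_U(v)\in K^{*}$. This is the only place where integrality enters. Note that no unimodularity or saturation is needed, because we only use the inclusion $\pi_U(\Lambda)\subseteq K^{*}$ and not the equality of Lemma~\ref{lem:projdual}.

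Next, choose $x\in K$ closest to $y$, so that $|y-x|\le\operatorname{dist}(y,K)\le\rho_{\mathrm{glue}}(K)$. Consider $w=v-x\in\Lambda$. Since $x\in U$, we have $\pi_V(w)=\pi_V(v)\ne0$, so $w\neq0$ and therefore $|w|^2\ge\mu$. By orthogonality,
\[
  |w|^2=|\pi_U(w)|^2+|\pi_V(w)|^2=|y-x|^2+|\pi_V(v)|^2 ,
\]
which gives $|\pi_V(v)|^2\ge\mu-|y-x|^2\ge\mu-\rho^2_{\mathrm{glue}}(K)$. I will also record the bookkeeping behind calling this a maximum. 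The function $y\mapsto\operatorname{dist}(y,K)$ is $K$-periodic, so it depends only on the class of $y$ in $K^{*}/K$. That group is finite because $K\subseteq K^{*}$, which holds since $K$ is integral as a sublattice of an integral lattice; both have rank $k$. So the supremum is a maximum over finitely many values, each attained by a nearest lattice point.

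For the comparison with the covering radius, I use $K^{*}\subset U$ and the definition $\rho_{\mathrm{cov}}(K)=\sup_{z\in U}\operatorname{dist}(z,K)$. Restricting the supremum to $z\in K^{*}$ can only decrease it, so $\rho_{\mathrm{glue}}(K)\le\rho_{\mathrm{cov}}(K)$. I expect no real obstacle. The one point to state carefully is that $\pi_V(v)\ne0$ forces $w\ne0$, so that the ambient minimum applies, and that it applies to $w$ rather than to $v$; translating by the nearest $x\in K$ is what makes the bound depend only on the coset of $\pi_U(v)$.
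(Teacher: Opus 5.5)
Your proposal is correct and follows the paper's proof. Integrality puts $\pi_U(v)$ in $K^{*}$. Subtracting a nearest $x\in K$ gives a nonzero lattice vector $v-x$ with $\mu\le|v-x|^2=|\pi_V(v)|^2+|\pi_U(v)-x|^2$. The covering-radius comparison is just restricting the supremum from $U$ to $K^{*}$, and your remark on the finiteness of $K^{*}/K$ is a harmless addition the paper leaves implicit.
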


\begin{proof}
  Since $\Lambda$ is integral, $\langle v,x\rangle\in\mathbb{Z}$ for all
  $x\in K$, so $\pi_U(v)\in K^{*}$. Choose $x\in K$ nearest to
  $\pi_U(v)$; then
  $|\pi_U(v)-x|^2\le\rho^2_{\mathrm{glue}}(K)$. The vector $v-x$ lies in
  $\Lambda$ and is nonzero because $\pi_V(v-x)=\pi_V(v)\ne0$, so
  \[
    \mu
    \;\le\;
    |v-x|^2
    \;=\;
    |\pi_V(v)|^2+|\pi_U(v)-x|^2
    \;\le\;
    |\pi_V(v)|^2+\rho^2_{\mathrm{glue}}(K).
  \]
  Finally, the ordinary covering radius maximizes
  $\operatorname{dist}(y,K)$ over all $y\in U$, while the glue radius
  maximizes it only over $y\in K^{*}$, hence
  $\rho_{\mathrm{glue}}(K)\le\rho_{\mathrm{cov}}(K)$.
\end{proof}

The argument is again theirs: equation~(1) of
\cite[\S II]{conway1982laminated} reads $\gamma_{n-1}\ge4-h_{n-1}^2$, with
$h_{n-1}$ the distance from the extension vector to the layer lattice, a
quantity they call the \emph{subcovering radius} and then bound by the
ordinary covering radius. In the present integral setting, $\pi_U(v)$ is
restricted to $K^{*}$ rather than being an arbitrary point of $U$. Thus the
intrinsic glue radius gives a bound no weaker than the ordinary
covering-radius bound and is computable by a finite enumeration of $K^{*}/K$.
The difference decides the question at hand. The ordinary covering radius of
$\sqrt3E_8$ is $\sqrt3$, so bounding by it gives only
$\det\ge3^8(6-3)=19683$, which leaves dimension $39$ open, whereas
$\rho^2_{\mathrm{glue}}(\sqrt3E_8)=8/3$ gives
$3^8(6-8/3)=21870$, and $21870$ is attained.

\subsection{The chain}

Combining Proposition~\ref{prop:extend}(2) with Lemma~\ref{lem:covering}:
every rank-$(k+1)$ cross-section containing $K$ has determinant at least
$\det K\,(\mu - \rho^2_{\mathrm{glue}}(K))$, in any integral ambient of minimum $\mu$.
Table~\ref{tab:chain} evaluates $\rho^2_{\mathrm{glue}}$ for the sections at hand. Each row's
bound is \emph{attained} by the next row's section, so each step of the chain
is optimal.

\begin{table}[ht]
  \centering
  \begin{tabular}{lrccrll}
    \toprule
    $K$ & $\det K$ & $\rho^2_{\mathrm{glue}}(K)$ & $6-\rho^2_{\mathrm{glue}}(K)$
    & $\det K\,(6-\rho^2_{\mathrm{glue}}(K))$ & attained by & complement \\
    \midrule
    $\sqrt3E_6$ & $2187$ & $4$ & $2$ & $4374$ & $\sqrt3E_7$ & $\dim 41$, $61.57\ldots$ \\
    $\sqrt3E_7$ & $4374$ & $9/2$ & $3/2$ & $6561$ & $\sqrt3E_8$ & $\dim 40$, $41.05\ldots$ \\
    $\sqrt3E_8$ & $6561$ & $8/3$ & $10/3$ & $21870$ & $K_9$ & $\dim 39$, $18.35\ldots$ \\
    $K_9$ & $21870$ & $7/2$ & $5/2$ & $54675$ & $K_{10}$ & $\dim 38$, $9.48\ldots$ \\
    $K_{10}$ & $54675$ & $34/9$ & $20/9$ & $121500$ & & $\dim 37$, $\le 5.19\ldots$ \\
    \bottomrule
  \end{tabular}
  \caption{The chain of optimal extensions. By Lemma~\ref{lem:covering} every
    rank-$(k+1)$ cross-section containing $K$ has determinant at least
    $\det K\,(6-\rho^2_{\mathrm{glue}}(K))$, in any integral ambient of minimum $6$,
    where $k = \operatorname{rank} K$; the fifth column is this floor, and the
    sixth shows that each bound is attained by the next section of the chain,
    so every step is optimal. The values of $\rho^2_{\mathrm{glue}}$ are exact, obtained by
    enumerating all $\det K$ classes of $K^{*}/K$; the maximum is attained by
  $2$, $1$, $1920$, $561$ and $324$ classes respectively.}
  \label{tab:chain}
\end{table}

\begin{theorem}
  \label{th:chain}
  Let $\Lambda$ be a unimodular $48$-dimensional lattice of minimum $6$
  containing a saturated $\sqrt3E_8$. Then:
  \begin{enumerate}
    \item every rank-$9$ cross-section of $\Lambda$ containing that
      $\sqrt3E_8$ has determinant at least $21870$;
    \item in $P_{48p}$, $P_{48q}$ and $P_{48n}$ the value $21870$ is attained
      by a saturated section $K_9$, whose complement is a $39$-dimensional
      lattice packing of center density
      $2^{-20}3^{16}5^{-1/2} = 18.35\ldots$;
    \item every rank-$10$ cross-section containing that $K_9$ has determinant
      at least $54675$, attained by a saturated $K_{10}$ whose complement is
      a $38$-dimensional lattice packing of center density
      $2^{-19}3^{31/2}5^{-1} = 9.48\ldots$;
    \item every rank-$11$ cross-section containing $K_{10}$ has determinant at
      least $121500$, so its complement has center density at most
      $5.19\ldots$, below the record in dimension $37$.
  \end{enumerate}
\end{theorem}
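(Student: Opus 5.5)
The plan is to combine Proposition~\ref{prop:extend}(2) with Lemma~\ref{lem:covering} for the lower bounds, and to exhibit explicit sections for the attainment claims. For part~(1), a rank-$9$ cross-section $W$ containing the saturated $K_8\cong\sqrt3E_8$ contains the saturation of $K_8+\mathbb{Z}v$ for some $v\in\Lambda$ with $\pi_V(v)\ne0$. By Proposition~\ref{prop:extend}(1),(2), its determinant is $\det K_8\cdot|\pi_V(v)|^2$, possibly divided down if we pass to the saturation. It is cleaner to argue directly: if $W$ is saturated, it equals $K_8+\mathbb{Z}v$ for suitable $v$. Lemma~\ref{lem:covering} then gives $|\pi_V(v)|^2\ge 6-\rho^2_{\mathrm{glue}}(\sqrt3E_8)$. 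A non-saturated $W$ has determinant at least that of its saturation, so the bound covers it too.

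The one new computation for part~(1) is $\rho^2_{\mathrm{glue}}(\sqrt3E_8)=8/3$. Since $\sqrt3E_8$ has dual $\tfrac1{\sqrt3}E_8$, the classes of $K^*/K$ correspond to $E_8/3E_8$, with $3^8$ classes. The task is to find $\max_{y\in E_8}\min_{x\in E_8}\tfrac13|y-3x|^2$. This is $\tfrac13$ of the largest norm of a minimal coset representative of $E_8/3E_8$. The orbits of $W(E_8)$ on $E_8/3E_8$ have minimal representatives of norms $0,2,4,6,8$, with the norm-$8$ class of size $1920$, i.e.\ the $240\cdot16/2$ deep-hole-type classes. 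This gives $8/3$ and $3^8(6-8/3)=21870$.

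For part~(2), I would write down an explicit $v\in P_{48p}$ in the coordinates of Remark~\ref{rem:ambients}, using the Pless-code construction and the fixed lattice of $\sigma$. The vector should have norm $6$ and project onto a class of $K_8^*/K_8$ at squared distance $8/3$ from $K_8$, so that $|\pi_V(v)|^2=10/3$. That $K_9=K_8+\mathbb{Z}v$ is saturated follows because its determinant $21870$ is already the minimum, so its saturation cannot be smaller. The same holds for $P_{48q}$ and $P_{48n}$ via the realizations of $\sigma$ noted there. The density then follows from \eqref{eq:bare}: $(3/2)^{39/2}/\sqrt{21870}=2^{-20}3^{16}5^{-1/2}$.

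Parts (3) and (4) repeat the argument one and two ranks up. I would compute the Gram matrix of $K_9$ from the explicit $v$ and enumerate its $21870$ classes of $K_9^*/K_9$. For each class I would run a closest-vector search in rank $9$, using Fincke--Pohst on the coset, to get $\rho^2_{\mathrm{glue}}(K_9)=7/2$. This yields the floor $21870\cdot5/2=54675$, attained by an explicit $w$ with $|\pi_V(w)|^2=5/2$. The same procedure for $K_{10}$ gives $34/9$ and the floor $54675\cdot20/9=121500$. Part~(4) then follows from \eqref{eq:bare} with $l=37$: $(3/2)^{37/2}/\sqrt{121500}\approx5.19$, which is below the tabulated record.

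The main obstacle is that the glue-radius values for $K_9$ and $K_{10}$ have no structural shortcut. They must be certified by exhaustive finite enumeration, which requires exact rational arithmetic and a guaranteed-complete closest-vector search for each class. The second difficulty is locating the explicit extension vectors in the ambient lattices. I would try first to search the norm-$6$ vectors of $\Lambda$ whose projection to $U$ lies in a maximizing class, since any attaining $v$ can be translated by $K$ to have norm exactly $6$.
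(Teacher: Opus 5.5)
Your proposal is correct and is essentially the paper's proof. The lower bounds come from Lemma~\ref{lem:covering} with exact values of $\rho^2_{\mathrm{glue}}$, the attainments are explicit certified sections (saturation forced, as you note, by minimality of the determinant), and the densities come from~\eqref{eq:bare}.

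The only variation is that you obtain $\rho^2_{\mathrm{glue}}(\sqrt3E_8)=8/3$ structurally rather than by enumerating $K^{*}/K$. That is sound once you add three facts. Norms are constant mod $6$ on cosets of $3E_8$. The covering radius $1$ of $E_8$ puts a representative of norm at most $9$ in every coset, so every minimal norm is at most $8$. The value $8$ is attained, e.g.\ by the coset of $(2,1,1,1,1,0,0,0)$.
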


\begin{proof}
  The lower bounds are Lemma~\ref{lem:covering} with the values of $\rho^2_{\mathrm{glue}}$
  in Table~\ref{tab:chain}, computed exactly by enumerating the classes of
  $K^{*}/K$ (Section~\ref{sec:search}); the attainments are the certified
  sections of Appendix~\ref{app:certificates}, and the densities
  are~\eqref{eq:bare}. For (4), $(3/2)^{37/2}/\sqrt{121500} = 5.19\ldots$
  while the dimension-$37$ record is $2^{5/2} = 5.65\ldots$.
\end{proof}

The Gram matrices of the two new sections, in the bases of
Appendix~\ref{app:certificates}, are
\[
  \mathbf{K}_9 \;=\;
  \begin{bmatrix}
    6 & -3 & -3 & -3 & -3 & -3 & -3 &  3 & -2 \\
    -3 &  6 &  3 &  3 &  3 &  3 &  3 &  0 &  1 \\
    -3 &  3 &  6 &  0 &  3 &  0 &  0 & -3 &  3 \\
    -3 &  3 &  0 &  6 &  3 &  3 &  3 &  0 & -1 \\
    -3 &  3 &  3 &  3 &  6 &  3 &  3 & -3 &  1 \\
    -3 &  3 &  0 &  3 &  3 &  6 &  3 &  0 &  0 \\
    -3 &  3 &  0 &  3 &  3 &  3 &  6 &  0 &  1 \\
    3 &  0 & -3 &  0 & -3 &  0 &  0 &  6 & -1 \\
    -2 &  1 &  3 & -1 &  1 &  0 &  1 & -1 &  6
  \end{bmatrix},
\]
\[
  \mathbf{K}_{10} \;=\;
  \begin{bmatrix}
    6 & -3 & -3 & -3 & -3 & -3 & -3 &  3 & -2 &  1 \\
    -3 &  6 &  3 &  3 &  3 &  3 &  3 &  0 &  1 & -1 \\
    -3 &  3 &  6 &  0 &  3 &  0 &  0 & -3 &  3 &  1 \\
    -3 &  3 &  0 &  6 &  3 &  3 &  3 &  0 & -1 &  0 \\
    -3 &  3 &  3 &  3 &  6 &  3 &  3 & -3 &  1 & -1 \\
    -3 &  3 &  0 &  3 &  3 &  6 &  3 &  0 &  0 & -2 \\
    -3 &  3 &  0 &  3 &  3 &  3 &  6 &  0 &  1 & -2 \\
    3 &  0 & -3 &  0 & -3 &  0 &  0 &  6 & -1 &  1 \\
    -2 &  1 &  3 & -1 &  1 &  0 &  1 & -1 &  6 &  2 \\
    1 & -1 &  1 &  0 & -1 & -2 & -2 &  1 &  2 &  6
  \end{bmatrix},
\]
of determinants $21870 = 2\cdot3^7\cdot5$ and $54675 = 3^7\cdot5^2$. The two
bases are nested: the leading $8\times 8$ block of each is a Gram matrix of
$\sqrt3E_8$, and $\mathbf{K}_9$ is the leading $9\times 9$ block of
$\mathbf{K}_{10}$, so the inclusions
$\sqrt3E_8 \subset K_9 \subset K_{10}$ of Theorem~\ref{th:chain} can be read
off the matrices rather than taken on trust. Neither is a
rescaled root lattice, since $21870/3^9$ and $54675/3^{10}$ are not integers;
their discriminant groups are
$(\mathbb{Z}/3)^6\times\mathbb{Z}/30$ and
$(\mathbb{Z}/3)^4\times\mathbb{Z}/15\times\mathbb{Z}/45$, and they have $258$
and $294$ minimal vectors.

\begin{remark}
  \label{rem:e8component}
  Concretely, since $K_8$ is saturated, Lemma~\ref{lem:projdual} gives
  $\pi_U(\Lambda) = K_8^{*} = \tfrac1{\sqrt3}E_8$, so every $v \in \Lambda$
  has $\pi_U(v) = e/\sqrt3$ with $e \in E_8$, and
  $|\pi_V(v)|^2 = |v|^2 - |e|^2/3$: a \emph{larger} $E_8$-component means a
  \emph{smaller} determinant. For a minimal vector $v$ and every root
  $\rho$ of $E_8$ we have $|v - \sqrt3\rho|^2 \ge 6$, i.e.\ $(e,\rho) \le 3$,
  and an enumeration of the vectors of $E_8$ of norm at most
  $|e|^2 \le 3|v|^2 = 18$ shows that this forces $|e|^2 \le 8$. The bound is
  attained: of the $52{,}416{,}000$ minimal vectors of $P_{48p}$, exactly
  $272{,}160$ have $|e|^2 = 8$, and each of them extends $K_8$ to a section
  of determinant $3^8(6 - 8/3) = 21870$. In the Leech lattice the same
  computation gives only $(e,\rho) \le 2$ and no comparable gain, which is
  why this route is special to minimum $6$.
\end{remark}

\begin{remark}
  \label{rem:noclusters}
  For $K_9$ and $K_{10}$, our exhaustive cluster maximization over the duals,
  with no cap on the number of dual points or on the clique search and with
  $\beta\le1$, returns $s=1$. Thus no cluster with $\beta\le1$ improves the
  bare packing. We do not infer anything here about clusters of diameter
  $\beta>1$; such clusters may contain additional dual points and are not
  ruled out by this computation.
\end{remark}

\section{Dimension 43: beyond the bare cross-section}
\label{sec:clusters}

Evaluating Proposition~\ref{prop:ceiling} rank by rank says how far the
bare route reaches. In rank $5$ it stops short of dimension $43$.

\begin{corollary}
  \label{cor:d43}
  No bare cross-section of any unimodular $48$-dimensional lattice gives a
  $43$-dimensional packing of center density above
  $2^{-45/2}3^{19} = 195.94\ldots$, and that value is attained by
  the section $\sqrt3D_5$ of determinant $972$. In particular the
  $43$-dimensional record of Table~\ref{tab:results} cannot be obtained from
  a bare cross-section. This is the second assertion of
  Corollary~8 of Conway and Sloane~\cite{conway1982laminated}, stated there
  for every even unimodular $48$-dimensional lattice of minimum $6$ and for
  all dimensions from $43$ upwards.
\end{corollary}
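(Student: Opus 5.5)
Apply Proposition~\ref{prop:ceiling} with $k = 5$. The Hermite constant in rank $5$ is known: $\gamma_5^5 = 8$, attained by $D_5$. So every rank-$5$ cross-section $K$ of a unimodular $48$-dimensional lattice of minimum at most $6$ satisfies
\[
  \det K \;\ge\; \frac{6^5}{8} \;=\; \frac{7776}{8} \;=\; 972 .
\]
The remark after Proposition~\ref{prop:ceiling} extends this to every unimodular ambient of rank $48$: a smaller minimum only lowers the bound, and odd ambients have minimum at most $6$ by the shadow bound.

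Substituting $\det K \ge 972 = 2^2\cdot 3^5$ into~\eqref{eq:bare} with $l = 43$ gives
\[
  \delta_{\mathrm{bare}} \;\le\; \frac{(3/2)^{43/2}}{\sqrt{2^2\cdot3^5}}
  \;=\; \frac{3^{43/2-5/2}}{2^{43/2+1}} \;=\; 2^{-45/2}3^{19}.
\]
Equivalently, this is~\eqref{eq:ceiling} evaluated with $\delta^{\mathrm{lat}}_5 = (\gamma_5/4)^{5/2} = 2^{-7/2}$. Since $2^{-45/2}3^{19} = 195.94\ldots$ is less than $226.51\ldots$, the $43$-dimensional record of Theorem~\ref{th:main} cannot come from a bare cross-section. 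Here the paper's convention matters: "bare" means the packing at the inherited ambient radius $\sqrt{\mu}/2$.

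For attainment, I would exhibit $\sqrt3 D_5$ as a saturated section of an extremal lattice. The cleanest route reuses Lemma~\ref{lem:nested}. The Dynkin diagram $E_6$ contains $D_5$ as a subdiagram on a subset of simple roots. A subset of a $\mathbb{Z}$-basis spans a saturated subgroup, and saturation is transitive, so $K_6 \cong \sqrt3E_6$ yields a saturated $\sqrt3D_5 \subset P_{48p}$. Its determinant is $3^5 \det D_5 = 3^5\cdot 4 = 972$, and~\eqref{eq:bare} then gives exactly $2^{-45/2}3^{19}$. This recovers Conway and Sloane's own section, which they found independently.

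The only step needing care is the uniformity over all ambients, and the remark after Proposition~\ref{prop:ceiling} already handles it. I expect no real obstacle beyond checking that the subdiagram is taken on simple roots, so that saturation holds.
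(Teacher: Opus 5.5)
Your bound is the paper's argument exactly: Proposition~\ref{prop:ceiling} at $k=5$ with $\gamma_5^5=8$ gives $\det K\ge 972$ and $\delta_{\mathrm{bare}}\le (3/2)^{43/2}/\sqrt{972}=2^{-45/2}3^{19}$. The comparison with $226.51\ldots$ is also the same.

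You differ only in how you get attainment. The paper identifies the extremal section with Conway and Sloane's own dimension-$43$ section $\sqrt3D_5$. It adds that equality forces $K\cong\sqrt3D_5$, because $D_5$ is the unique critical lattice in rank $5$. You instead build the section inside the paper's chain. Deleting an end node of the $E_6$ diagram in $K_6\cong\sqrt3E_6$ leaves a $D_5$ diagram on a subset of a $\mathbb{Z}$-basis. Lemma~\ref{lem:nested} and transitivity of saturation then give a saturated $\sqrt3D_5\subset P_{48p}$ of determinant $3^5\cdot 4=972$. That is correct, and it makes attainment self-contained given Proposition~\ref{prop:nebe}. The cost is that it rests on Nebe's automorphism rather than on Conway and Sloane's explicit coordinates. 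It also does not give the uniqueness the paper mentions, though the statement does not require it.

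One sentence needs fixing. $\det K\ge 972$ holds for an ambient of minimum $6$, not ``minimum at most $6$''. For ambient minimum $\mu<6$ you only get $\det K\ge \mu^5/8$; for example, $\mathbb{Z}^5\subset\mathbb{Z}^{48}$ has determinant $1$. What carries over to smaller $\mu$ is the density ceiling, not the determinant floor. The inherited radius $\sqrt\mu/2$ shrinks too, so $\delta_{\mathrm{bare}}\le 4^{-43/2}\gamma_5^{5/2}\mu^{19}$, which increases with $\mu$. That is what the remark after Proposition~\ref{prop:ceiling} means by ``lowers the right-hand side''.
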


\begin{proof}
  Proposition~\ref{prop:ceiling} with $k = 5$ and $\gamma_5^5 = 8$ gives
  $\det K \ge 6^5/8 = 972$ and
  $\delta_{\mathrm{bare}} \le (3/2)^{43/2}/\sqrt{972} = 2^{-45/2}3^{19}$,
  with equality only for $\sqrt3D_5$, which is the
  dimension-$43$ section of Conway and Sloane~\cite[Corollary~8]{conway1982laminated}.
\end{proof}

\begin{table}[ht]
  \centering
  \begin{tabular}{@{}cccc>{\raggedright\arraybackslash}p{0.38\textwidth}@{}}
    \toprule
    $k$ & $48-k$ & densest known lattice & bound / benchmark & status \\
    \midrule
    $5$ & $43$ & $D_5$ & $195.94\ldots$ & known $43$-dim.\ packing $226.51\ldots$ exceeds this ceiling: no bare cross-section reaches it \\
    $6$ & $42$ & $E_6$ & $106.65\ldots$ & attained exactly; the section is optimal \\
    $7$ & $41$ & $E_7$ & $61.57\ldots$ & attained exactly; the section is optimal \\
    $8$ & $40$ & $E_8$ & $41.05\ldots$ & attained exactly; the section is optimal \\
    \midrule
    $9$ & $39$ & $\Lambda_9$ & $19.35\ldots$ & bare improvement possible; $\det K<19683$ would also improve the rank-$9$ lattice record \\
    $10$ & $38$ & $\Lambda_{10}$ & $10.53\ldots$ & bare improvement possible; $\det K\le44286$ would also improve the rank-$10$ lattice record \\
    \bottomrule
  \end{tabular}
  \caption{Bare cross-section bounds and current-record benchmarks.
    For $k\le8$ (above the rule), the displayed value is the unconditional
    ceiling of Proposition~\ref{prop:ceiling}, because the Hermite constant is
    known from published results. For $k=9,10$ we instead display the benchmark
    $\underline{\delta}^{\mathrm{lat}}_{k}(3/2)^{24-k}$ from the densest
    lattice currently known in dimension $k$ \cite{nebe2026density}. Exceeding
    this benchmark with a bare cross-section would simultaneously produce a
    new $k$-dimensional lattice record. A recent preprint, discussed in
    Section~\ref{sec:ceiling},
  claims that the $k=9$ benchmark is in fact unconditional.}
  \label{tab:ceiling}
\end{table}

For $k=9,10$ we do not use an established value of the Hermite constant, so
putting the current record $\underline{\delta}^{\mathrm{lat}}_{k}$ in place
of $\delta^{\mathrm{lat}}_{k}$ gives only a benchmark. Since
$\underline{\delta}^{\mathrm{lat}}_{k}\le\delta^{\mathrm{lat}}_{k}$, the
resulting determinant benchmark
$(3/2)^k/(\underline{\delta}^{\mathrm{lat}}_{k})^2$ is at least as large as
the true determinant floor, whereas the density benchmark
$\underline{\delta}^{\mathrm{lat}}_{k}(3/2)^{24-k}$ is at most as large as
the true ceiling. Thus crossing either benchmark would establish a new
rank-$k$ lattice record, but failing to cross it does not rule out a bare
improvement in dimensions $48-k$. Concretely, in dimension $39$ a section
with $19683\le\det K<21870$ would improve our packing without necessarily improving the
current rank-$9$ lattice record; a smaller determinant would improve both. In
dimension $38$, a section with $44287\le\det K<54675$ would improve our
packing without necessarily improving the current rank-$10$ lattice record,
whereas $\det K\le44286$ would improve both. The exact rank-$10$ benchmark is
$3^{11}/4=44286.75$, so the last inequality is equivalent because
$\det K$ is integral.

No bare cross-section, then, beats $195.94\ldots$ in dimension $43$. A cluster
does, and this is where the second lever in~\eqref{eq:master} earns its keep.

Let $K$ be the $5$-dimensional lattice with Gram matrix
\[
  \mathbf{K} \;=\;
  \begin{bmatrix}
    6 & -1 & -1 & -2 & -1 \\
    -1 &  6 & -2 & -1 & -1 \\
    -1 & -2 &  6 & -1 & -1 \\
    -2 & -1 & -1 &  6 & -1 \\
    -1 & -1 & -1 & -1 &  6
  \end{bmatrix},
  \qquad
  \mathbf{M} \;=\; \mathbf{K}^{-1} \;=\; \frac{1}{48}
  \begin{bmatrix}
    14 &  7 &  7 &  8 &  6 \\
    7 & 14 &  8 &  7 &  6 \\
    7 &  8 & 14 &  7 &  6 \\
    8 &  7 &  7 & 14 &  6 \\
    6 &  6 &  6 &  6 & 12
  \end{bmatrix},
\]
so $\det \mathbf{K} = 3072 = 2^{10}\cdot 3$. It is a saturated cross-section
of $P_{48n}$ and of $P_{48m}$ (Appendix~\ref{app:certificates}). The
hypotheses of Theorem~\ref{th:simplex} hold with $\beta = 7/24$: the diagonal
entries of $\mathbf{M}$ are $14/48$ and $12/48$, and the quantities
$\mathbf{M}_{i,i}+\mathbf{M}_{j,j}-2\mathbf{M}_{i,j}$ are all $12/48$ or
$14/48$. The cluster is $S = \{0, m_1, \ldots, m_5\}$, the origin together
with the basis of $M$ dual to the chosen basis of $K$; its six points form
three pairs at squared distance $1/4$, namely $\{0,m_5\}$, $\{m_1,m_4\}$
and $\{m_2,m_3\}$, and all other distances equal $\beta = 7/24$. Hence
\[
  \delta \;=\; 6 \cdot \frac{1}{\sqrt{3072}}
  \left( \frac{6-\tfrac7{24}}{4} \right)^{43/2}
  \;=\; \frac{137^{43/2}}{2^{223/2}\cdot 3^{21}}
  \;=\; 226.51\ldots,
\]
improving the 1982 record $195.94\ldots$ by a factor $1.15\ldots$ that
splits as $6 \cdot \sqrt{972/3072} \cdot (137/144)^{43/2}$: cluster size,
determinant penalty, radius penalty. The lattice $K$ is not a rescaled root
lattice ($3072/3^5 \notin \mathbb{Z}$), has $12$ minimal vectors, and has
discriminant group $\mathbb{Z}/4\times\mathbb{Z}/16\times\mathbb{Z}/48$.

\section{Kissing numbers}
\label{sec:kissing}

Proposition~\ref{prop:kissing} turns a kissing number into a finite count over
the complete list of $52{,}416{,}000$ minimal vectors of the ambient, so every
packing in this paper comes with one for free. Three families are worth
recording: the $1982$ cross-sections, whose kissing numbers appear never to
have been computed; the chain of Section~\ref{sec:chain}, which improves the
tabulated bounds in dimensions $40$, $41$ and $42$; and one further packing in
dimension $45$. Table~\ref{tab:kissing} collects what the three give in
dimensions $40$ to $47$.

\subsection{The 1982 cross-sections}
\label{sec:cor8}

Corollary~8 of Conway and Sloane~\cite{conway1982laminated} constructs
cross-sections of $P_{48p}$ orthogonal to
\[
  \sqrt3A_1,\ \sqrt3A_2,\ \sqrt3A_3,\ \sqrt3D_3,\ \sqrt3D_4,\ \ldots,\ \sqrt3D_7,\
  S_7,\ S_8,\ S_9,\ S_{10},
\]
where $S_n = \langle \sqrt3D_{n-1}, v^{*}\rangle$ are the glue lattices of the
corollary's proof. The resulting bare packings live in dimensions $47$ down to
$38$, with two in dimension $41$ (``$D_7$ or $S_7$'' in the corollary) and, as
the table shows, two in dimension $45$ as well. The corollary states their
densities but not their kissing numbers. We reconstruct all twelve in closed
form, in the coordinate frame of the corollary's own proof, and count exactly
(Table~\ref{tab:cor8}).

\begin{table}[ht]
  \centering
  \begin{tabular}{ccrr}
    \toprule
    dim & complement & kissing number & previously tabulated \\
    \midrule
    $47$ & $\sqrt3A_1$ & $\mathbf{23{,}766{,}960}$ & $9{,}741{,}412$ \\
    $46$ & $\sqrt3A_2$ & $\mathbf{12{,}309{,}600}$ & $5{,}318{,}060$ \\
    $45$ & $\sqrt3A_3$ & $6{,}687{,}648$ & $3{,}047{,}160$ \\
    $45$ & $\sqrt3D_3$ & $\mathbf{6{,}702{,}080}$ & $3{,}047{,}160$ \\
    $44$ & $\sqrt3D_4$ & $\mathbf{4{,}153{,}600}$ & $2{,}948{,}552$ \\
    $43$ & $\sqrt3D_5$ & $\mathbf{2{,}545{,}056}$ & $1{,}745{,}692$ \\
    $42$ & $\sqrt3D_6$ & $\mathbf{1{,}541{,}292}$ & $1{,}250{,}676$ \\
    $41$ & $\sqrt3D_7$ & $922{,}282$ & $1{,}170{,}384$ \\
    $41$ & $S_7$ & $922{,}664$ & $1{,}170{,}384$ \\
    $40$ & $S_8$ & $578{,}368$ & $1{,}064{,}368$ \\
    $39$ & $S_9$ & $360{,}716$ & $755{,}988$ \\
    $38$ & $S_{10}$ & $224{,}608$ & $566{,}652$ \\
    \bottomrule
  \end{tabular}
  \caption{The twelve cross-section packings of Conway and
    Sloane~\cite[Corollary~8]{conway1982laminated} and their kissing numbers,
    computed here. Bold entries exceed the previously tabulated lower bounds
    \cite{edel1998kissing, brouwer2026tables, cohn2026kissing}. The two
    dimension-$45$ rows are two embeddings of one lattice ($A_3 = D_3$), which
    the corollary's family contains both as the $A$-series and as the
  $D$-fork; see Remark~\ref{rem:embedding}.}
  \label{tab:cor8}
\end{table}

The count needs an explicit embedding, and the proof supplies one: it fixes
coordinates, taken from Leech and Sloane~\cite{leech1971sphere}, in which the
minimal vectors of $P_{48p}$ include all vectors with two entries $\pm3$ and
the rest zero, together with $v^{*} = (-2, 1^{14}, 0^{33})$. Our frame for
$P_{48p}$ is exactly that frame, so those vectors exist verbatim and a vector
of the shape of $v^{*}$ exists on the support of every weight-$15$ codeword.
We pin the lexicographically first such codeword and write every complement
basis down in closed form, with no search. Each kissing number is then one
sweep of the minimal shell, counting the fiber over zero
(Remark~\ref{rem:bare}).

In dimensions $42$ to $47$ these counts exceed the previously tabulated lower
bounds; in dimensions $38$ to $41$ they fall below them. The packings that
held the density records for four decades were also hiding the best kissing
numbers, in the range where nobody had looked.

\begin{remark}
  \label{rem:embedding}
  Unlike the density, which by Theorem~\ref{th:antipode} depends only on
  $\det L$, the kissing number of a cross-section packing depends on the
  embedding of the complement in the ambient lattice. The two dimension-$45$
  rows of Table~\ref{tab:cor8} show this concretely: $\sqrt3A_3$ and
  $\sqrt3D_3$ are the same lattice, but the corollary's family contains it
  twice, once as the $A$-series and once as the $D$-fork, and the two
  embeddings give $6{,}687{,}648$ and $6{,}702{,}080$ contacts. Since the
  corollary pins the embeddings only up to its frame condition, the values in
  Table~\ref{tab:cor8} are those of the canonical frame fixed above.
\end{remark}

\subsection{The chain}

The complements of $K_6$, $K_7$ and $K_8$ improve on
Table~\ref{tab:cor8} in their own dimensions, and on the tabulated bounds as
well:
\[
  1{,}792{,}386, \qquad 1{,}324{,}472, \qquad 1{,}092{,}000
\]
in dimensions $42$, $41$ and $40$. Only the first competes with a $1982$
section: in dimension $42$ the bound to beat is Table~\ref{tab:cor8}'s
$1{,}541{,}292$, while in dimensions $41$ and $40$ the $1982$ sections fall
well short of the tabulated $1{,}170{,}384$ and $1{,}064{,}368$, which are
therefore the relevant comparison. All three counts are the same in $P_{48p}$,
$P_{48q}$ and $P_{48n}$.

\begin{remark}
  \label{rem:lowkissing}
  The last two sections of the chain do not improve their dimensions. The
  complement of $K_9$ has $604{,}644$, $604{,}472$ and $604{,}892$ contacts in
  $P_{48p}$, $P_{48q}$ and $P_{48n}$, and that of $K_{10}$ has $381{,}658$,
  $381{,}438$ and $381{,}620$, all below the records $755{,}988$ and
  $566{,}652$. That the three counts differ is itself informative: by
  Theorem~\ref{th:antipode} the three embeddings of each section give packings
  of equal density, so they are inequivalent under the automorphism groups of
  their ambients even though no density computation can distinguish them.
\end{remark}

\subsection{A kissing record in dimension 45}
\label{sec:kissing45}

Ranking cross-sections by the kissing number of their antipode packing rather
than by density yields one more packing, which beats even the corresponding
$1982$ cross-section. Here $K$ is the $3$-dimensional cross-section of
$P_{48m}$ with Gram matrix
\[
  \mathbf{K} \;=\;
  \begin{bmatrix}
    6 & -2 & -2 \\
    -2 &  6 & -2 \\
    -2 & -2 &  6
  \end{bmatrix},
  \qquad
  \mathbf{M} \;=\; \mathbf{K}^{-1} \;=\; \frac{1}{8}
  \begin{bmatrix}
    2 & 1 & 1 \\
    1 & 2 & 1 \\
    1 & 1 & 2
  \end{bmatrix},
  \qquad \det \mathbf{K} = 128,
\]
isometric to $2\sqrt2A_3^{*}$, the rescaled body-centered cubic lattice. The
hypotheses of Theorem~\ref{th:simplex} hold with $\beta = \tfrac14$, all
pairwise squared distances of $S = \{0, m_1, m_2, m_3\}$ being exactly
$\tfrac14$, and the center density
\[
  \delta \;=\; 4 \cdot \frac{1}{\sqrt{128}}
  \left( \frac{6 - \tfrac14}{4} \right)^{45/2}
  \;=\; \frac{23^{45/2}}{2^{183/2}} \;=\; 1243.45\ldots
\]
equals the record of Chen, Hu, Li, Wang and Wu~\cite{chen2025antipode}. Every
fiber of Proposition~\ref{prop:kissing} contains $2{,}459{,}946$ minimal
vectors and the per-sphere counts are equal across translates, so every sphere
touches
\[
  3 \cdot 2{,}459{,}946 \;=\; 7{,}379{,}838
\]
others, against $6{,}702{,}080$ for the better of the two $1982$
dimension-$45$ cross-sections. The section embeds in $P_{48q}$ as well, giving
a packing of the same density by Theorem~\ref{th:antipode} and, although the
kissing number need not survive a change of embedding
(Remark~\ref{rem:embedding}), an exact count there returns the same
$7{,}379{,}838$.

\begin{table}[ht]
  \centering
  \begin{tabular}{crrr}
    \toprule
    dim & previously tabulated & CS82~\cite[Cor~8]{conway1982laminated} & this paper \\
    \midrule
    $40$ & $1{,}064{,}368$ & $578{,}368$ & $\mathbf{1{,}092{,}000}$ \\
    $41$ & $1{,}170{,}384$ & $922{,}664$ & $\mathbf{1{,}324{,}472}$ \\
    $42$ & $1{,}250{,}676$ & $1{,}541{,}292$ & $\mathbf{1{,}792{,}386}$ \\
    $43$ & $1{,}745{,}692$ & $\mathbf{2{,}545{,}056}$ & \\
    $44$ & $2{,}948{,}552$ & $\mathbf{4{,}153{,}600}$ & \\
    $45$ & $3{,}047{,}160$ & $6{,}702{,}080$ & $\mathbf{7{,}379{,}838}$ \\
    $46$ & $5{,}318{,}060$ & $\mathbf{12{,}309{,}600}$ & \\
    $47$ & $9{,}741{,}412$ & $\mathbf{23{,}766{,}960}$ & \\
    \bottomrule
  \end{tabular}
  \caption{Kissing number lower bounds in dimensions $40$ to $47$. The middle
    column is Table~\ref{tab:cor8}, the kissing numbers of the $1982$
    cross-sections; the last is this paper's chain (dimensions $40$ to $42$)
    and the dimension-$45$ packing of Section~\ref{sec:kissing45}. Bold marks
    the largest entry of each row, and every one of them improves on the
    previously tabulated bound \cite{edel1998kissing, brouwer2026tables,
    cohn2026kissing}. For comparison the best known upper bounds in dimensions
    $44$ to $47$ are $220{,}788{,}272$, $316{,}735{,}249$, $441{,}900{,}184$
  and $621{,}658{,}419$ \cite{delaat2024solving, cohn2026kissing}.}
  \label{tab:kissing}
\end{table}

\section{Search and verification}
\label{sec:search}

Everything that decides a claim in this paper is computed in exact integer or
rational arithmetic; floating point and lattice reduction appear only inside
search heuristics.

The seed of Section~\ref{sec:seed} needs no search: $\sigma$ is written down
in closed form on the coordinate frame of the ternary code, its fixed lattice
is an integer kernel computation, and the $480$ minimal vectors of that
fixed lattice split into two orthogonal root systems of type $E_8$, from
which a basis with Gram matrix $3C(E_8)$ is read off.

The chain of Section~\ref{sec:chain} was found by sweeping the complete
list of $52{,}416{,}000$ minimal vectors of the ambient once, classifying each
by its projection onto the span of the current section
(Remark~\ref{rem:e8component}), and ranking extensions by the exact Schur
complement of Proposition~\ref{prop:extend}(2). The optimality of each step
is not a search result: $\rho^2_{\mathrm{glue}}(K)$ is computed by enumerating the $\det K$
classes of $K^{*}/K$ ($2187$, $4374$, $6561$, $21870$ and $54675$ classes
respectively) and taking, for each class, the minimum norm of its
representatives, a finite computation of rank at most $10$. As a
check, the dual shortest-vector problem of
Proposition~\ref{prop:extend}(3) was also solved directly in the complement
for the step $\sqrt3E_8 \to K_9$, with the enumeration running to completion,
and returned the same value $\mu(L^{*}) = 10/3$.

The dimension-$43$ section of Section~\ref{sec:clusters} was found by
simulated annealing over $5$-tuples of ambient lattice vectors: a move
replaces or perturbs one vector of the tuple, each tuple is completed to the
saturated section its span meets, and the section is scored by an exact lower
bound for the best antipode density over its dual; the best candidates are
then re-scored exhaustively, enumerating all short dual points and running a
maximum-clique search over admissible pairs for each realizable diameter.
Independent runs over $P_{48n}$ and over $P_{48m}$ converged to isometric
sections.

Each packing is recorded as a machine-checkable certificate that names the
ambient lattice, the section basis in ambient coordinates and the cluster,
and a verifier rebuilds the ambient from its published
construction and re-proves membership, saturation, $\det L = \det K$, the
cluster distances, the minimum, and the density. The code, the verifier and
certificates for every packing in this paper, in each of the ambients
where it exists, are available at
\url{https://github.com/wcgbg/sphere-packing}.

\section{Open problems}
\label{sec:open}

Table~\ref{tab:ceiling} localizes what is left for the bare
cross-section route and separates unconditional ceilings from current-record
benchmarks. In dimensions $40$, $41$ and $42$ the bare route is closed: the
sections of Corollary~\ref{cor:optimal} are optimal among bare
cross-section packings. This does not rule out antipode clusters over these or
other sections. In dimension $43$ the bare route is also closed, while the
cluster route is essential for our improvement and may admit further gains.

The interesting bare cases are dimensions $39$ and $38$. In dimension $39$,
any rank-$9$ section with determinant below $21870$ would improve our present
packing. The current published rank-$9$ lattice record corresponds to the
benchmark $19683$: under the published-literature convention used here, a
determinant below that is not excluded by an optimality theorem, but would
simultaneously produce a new $9$-dimensional lattice record. Thus
$[19683,21870)$ is the portion of the search interval that could improve
dimension $39$ without necessarily improving that lower-dimensional record. If the recent
dimension-$9$ preprint cited above is correct, then determinants below $19683$
are excluded and this interval is the full rigorous search window. Likewise,
for dimension $38$ one seeks determinant below $54675$. The current rank-$10$
lattice record gives the exact benchmark $3^{11}/4=44286.75$; because section
determinants are integral, $\det K\le44286$ would also improve the rank-$10$
lattice record, while $44287\le\det K<54675$ could improve dimension $38$
without doing so.

Any such section inside an even ambient lattice must itself be an even
integral lattice of minimum at least $6$, which is a real constraint: at the
rank-$9$ benchmark $19683=3^9$ the natural candidate, the rescaled laminated
lattice $\sqrt{3/2}\,\Lambda_9$, is not integral, so a rank-$9$ section of
that determinant would have to be some other lattice. We do not know which
even integral lattices of minimum $6$ occur in ranks $9$ and $10$, and
determining their smallest determinant would sharpen these searches. The
current lattice records alone do not provide rigorous determinant floors in
these ranks; the recent dimension-$9$ preprint, if correct, supplies the
rank-$9$ floor. We leave the corresponding integral-section questions open.

\section*{AI Disclosure}

We used Claude and ChatGPT to assist with
background research, generating and exploring ideas, developing the
search and verification code, checking derivations, running and
analyzing the computations, and drafting and editing this paper. All
computational claims underlying the new packings and kissing-number
computations are independently established by the exact-arithmetic
certificates and verifier of Section~\ref{sec:search}. The theoretical and
literature-based claims are supported by the arguments and references in the
paper. The authors have reviewed all content and take full responsibility for
it.

\appendix

\section{Certificates}
\label{app:certificates}

Each packing is specified by integer matrices: $\mathbf{G}$, the
$48\times48$ Gram matrix of a basis $b_1,\ldots,b_{48}$ of the ambient
lattice $\Lambda$; $\mathbf{T}$, a $k\times48$ matrix whose row $i$ holds the
coordinates of the $i$-th basis vector of the cross-section with respect to
$b_1,\ldots,b_{48}$; and, when the cluster is not trivial, $\mathbf{W}$, an
$s\times48$ matrix whose row $i$ holds the coordinates of a lattice point
$w_i$ with $\pi_U(w_i) = u_i$ (row $0$ being $w_0 = 0$). Identifying
$\Lambda$ with $\mathbb{Z}^{48}$ under
$\langle x,y\rangle = x\mathbf{G}y^{\mathsf{T}}$, the following
certificate-local claims can be checked in exact arithmetic:
\begin{enumerate}
  \item $\mathbf{G}$ is symmetric and positive definite with even diagonal
    and $\det\mathbf{G}=1$, so $\Lambda$ is an even unimodular lattice of
    rank $48$; positive definiteness can be checked in exact arithmetic, for
    example by an exact $LDL^{\mathsf{T}}$ decomposition;
  \item $\mathbf{T}\mathbf{G}\mathbf{T}^{\mathsf{T}} = \mathbf{K}$, the Gram
    matrix displayed in the text, with the stated determinant;
  \item the greatest common divisor of the $k\times k$ minors of $\mathbf{T}$
    is $1$ (equivalently, the Smith normal form of $\mathbf{T}$ is
    $[\,I_k\mid 0\,]$), so the rows of $\mathbf{T}$ span a saturated
    sublattice: $K = \Lambda \cap U$ exactly;
  \item $\mathbf{W}\mathbf{G}\mathbf{T}^{\mathsf{T}}$ is a zero row above
    $I_k$, which is the statement $S = \{0, m_1,\ldots,m_k\}$ of
    Theorem~\ref{th:simplex};
  \item the pairwise squared distances
    $d^2(u_i,u_j) = (c_i-c_j)\mathbf{K}^{-1}(c_i-c_j)^{\mathsf{T}}$ are as
    listed, all at most $\beta < 6$, with $\beta$ attained;
  \item Theorem~\ref{th:antipode} with $\det L = \det\mathbf{K}$ gives the
    stated density.
\end{enumerate}
The packing itself is determined by the same data:
$L = \{x \in \mathbb{Z}^{48} : x\mathbf{G}\mathbf{T}^{\mathsf{T}} = 0\}$ is
an integer kernel, and
$\mathcal{A}(S) = \bigcup_i (w_i - \pi_U(w_i) + L)$.

One claim is not certified by the printed data: the minimum $\mu = 6$ of the
ambient. For $P_{48p}$, $\mathbf{G}$ is the Gram matrix of the basis built
from the Pless symmetry code \cite{pless1972symmetry, conway1999sphere}; for
$P_{48m}$ it is the catalogue entry \texttt{P48m}. In both cases $\mu = 6$ is
the published extremality of the ambient, and a reader who prefers not to rely
on it can re-verify $\mu = 6$ from $\mathbf{G}$ by a short-vector enumeration.
Certificates for the same packings in the other
ambients ($P_{48q}$ and $P_{48n}$ for dimensions $38$--$42$, $P_{48n}$ for
dimension $43$, $P_{48q}$ for dimension $45$) are in the repository.

\subsection{Dimensions 42, 41 and 40}

Certificates \\
\texttt{records/dim42-delta106/P48p-e6f899b4}, \\
\texttt{records/dim41-delta61/P48p-28a12f16}, and \\
\texttt{records/dim40-delta41/P48p-6a1733b0}.

The three sections are the row
prefixes of one matrix: rows $1$--$6$ span $K_6 \cong \sqrt3E_6$, rows
$1$--$7$ span $K_7 \cong \sqrt3E_7$, and all eight rows span
$K_8 \cong \sqrt3E_8$, with
$\mathbf{T}\mathbf{G}\mathbf{T}^{\mathsf{T}} = 3\,C(E_8)$.

\paragraph{$\mathbf{G}$} (the Gram matrix of $P_{48p}$):
\begin{Verbatim}[fontsize=\tiny,breaklines=true,breakanywhere=true]
[[6 3 3 -3 3 -3 2 -3 -2 -3 3 3 -1 1 -1 -3 -2 -1 -2 -2 0 1 2 -1 0 0 2 2 -1 0 -2 -1 -1 0 -2 1 -2 1 -1 -2 0 3 -3 -3 -1 0 2 -2]
 [3 6 3 0 3 -3 0 -1 1 -3 3 3 1 0 0 0 -3 -2 -2 -2 -2 0 3 1 2 -2 -1 2 0 0 -1 0 1 0 -1 0 0 -1 -2 1 2 2 -1 -2 -2 -2 0 -1]
 [3 3 6 -3 3 -3 -1 -3 0 -3 3 3 2 -1 -2 0 -3 -2 -3 -2 0 2 3 2 0 1 0 0 1 2 -1 1 1 -2 1 -1 0 0 -1 -1 1 3 -3 -3 1 0 1 0]
 [-3 0 -3 6 0 0 1 2 0 2 0 0 1 -1 3 1 1 0 1 0 0 -3 -1 -1 0 -2 -2 1 -1 -2 1 -1 1 0 1 -1 1 -2 1 1 1 -2 2 2 -2 -1 -2 2]
 [3 3 3 0 6 -3 2 -3 -2 -3 3 3 1 -2 1 0 -1 -3 -2 -3 1 0 1 1 -1 0 -1 2 0 -1 0 0 0 0 1 -1 0 -2 -2 -2 1 3 -1 -2 0 -2 1 0]
 [-3 -3 -3 0 -3 6 0 1 2 1 -3 -3 0 -1 0 1 2 0 2 3 0 1 -2 -1 -1 0 1 -1 2 0 0 -1 1 2 1 2 1 -1 0 1 -2 -2 2 2 2 2 1 -1]
 [2 0 -1 1 2 0 6 0 -1 0 2 1 0 -1 0 -1 1 -2 1 -2 0 -2 -2 -2 -2 0 2 1 -2 -1 -1 -3 1 2 -1 1 -2 -1 1 -1 -2 1 0 0 0 0 0 -2]
 [-3 -1 -3 2 -3 1 0 6 1 3 -2 -3 -1 2 1 1 0 2 1 0 -1 -1 -1 0 2 0 0 0 -1 1 0 1 0 0 -1 -1 -1 1 1 2 -1 -1 3 1 0 0 -3 0]
 [-2 1 0 0 -2 2 -1 1 6 -1 0 -1 2 0 -1 1 1 -1 2 0 -2 1 1 1 1 0 -1 -2 1 1 0 0 3 1 0 0 0 -1 0 2 1 -1 0 2 1 -1 0 0]
 [-3 -3 -3 2 -3 1 0 3 -1 6 -3 -3 -2 0 1 2 1 3 2 2 -1 -2 -3 -1 -1 0 1 0 -2 1 2 0 0 -1 0 -1 1 0 2 2 -2 -3 2 1 -1 2 -2 0]
 [3 3 3 0 3 -3 2 -2 0 -3 6 3 1 -1 0 -1 -1 -2 -2 -3 1 -1 2 1 0 -1 0 1 -1 0 -2 -2 1 -1 0 0 -2 0 0 -1 2 3 -2 -1 0 -1 1 -1]
 [3 3 3 0 3 -3 1 -3 -1 -3 3 6 1 0 0 -2 -1 -2 -2 -1 1 0 2 -1 -1 -1 0 0 1 -1 -1 -1 0 -1 0 0 -1 0 -1 -2 2 1 -2 -2 -1 -1 1 1]
 [-1 1 2 1 1 0 0 -1 2 -2 1 1 6 -1 -2 1 -1 -3 0 -1 0 0 1 1 0 0 -1 -1 1 -1 -1 1 2 0 1 -1 1 -2 1 0 1 0 -1 1 0 0 0 1]
 [1 0 -1 -1 -2 -1 -1 2 0 0 -1 0 -1 6 -1 -2 -1 3 1 1 0 1 1 -2 1 1 1 -1 0 -1 -2 2 -2 -1 -2 -1 -3 2 1 -1 0 0 -1 -1 -2 0 -1 0]
 [-1 0 -2 3 1 0 0 1 -1 1 0 0 -2 -1 6 -1 2 1 0 0 2 0 0 -1 0 -1 -1 3 -1 0 1 -1 -1 0 1 0 0 -1 -2 0 2 -1 2 0 0 -1 0 1]
 [-3 0 0 1 0 1 -1 1 1 2 -1 -2 1 -2 -1 6 -1 -1 1 1 -2 -2 -1 3 0 -1 -1 -1 1 0 1 1 1 0 2 -1 3 -2 1 3 -1 0 2 1 0 0 -1 0]
 [-2 -3 -3 1 -1 2 1 0 1 1 -1 -1 -1 -1 2 -1 6 0 3 1 1 0 -3 -1 -2 1 -1 -1 0 0 2 -1 0 1 0 1 -1 0 0 -1 0 -2 1 3 1 -1 1 0]
 [-1 -2 -2 0 -3 0 -2 2 -1 3 -2 -2 -3 3 1 -1 0 6 0 2 0 0 -1 -1 1 1 0 0 -1 1 1 1 -2 -2 0 -1 -1 1 1 0 0 -2 1 -1 -1 1 -2 0]
 [-2 -2 -3 1 -2 2 1 1 2 2 -2 -2 0 1 0 1 3 0 6 1 -1 -1 -2 -2 -1 0 0 -1 0 -1 0 -1 0 2 -1 0 0 0 2 1 -1 -2 0 2 -1 0 0 -1]
 [-2 -2 -2 0 -3 3 -2 0 0 2 -3 -1 -1 1 0 1 1 2 1 6 -1 1 -2 -1 -1 0 1 -1 1 -1 0 1 0 0 1 2 1 0 0 1 -1 -2 1 1 -1 1 0 0]
 [0 -2 0 0 1 0 0 -1 -2 -1 1 1 0 0 2 -2 1 0 -1 -1 6 1 1 -1 -2 1 1 0 0 -1 0 0 -2 -1 1 -1 -1 0 -1 -3 1 0 0 0 2 1 1 1]
 [1 0 2 -3 0 1 -2 -1 1 -2 -1 0 0 1 0 -2 0 0 -1 1 1 6 1 0 -1 2 1 0 1 2 -1 2 0 -1 1 0 -1 0 -2 -1 0 1 -1 -1 2 0 1 0]
 [2 3 3 -1 1 -2 -2 -1 1 -3 2 2 1 1 0 -1 -3 -1 -2 -2 1 1 6 1 2 -1 0 0 1 0 -1 1 0 -1 0 -1 0 1 -1 0 2 2 -2 -2 0 0 1 1]
 [-1 1 2 -1 1 -1 -2 0 1 -1 1 -1 1 -2 -1 3 -1 -1 -2 -1 -1 0 1 6 2 0 -2 -1 0 1 1 2 1 -1 1 -1 2 0 0 2 1 2 1 0 2 -2 0 0]
 [0 2 0 0 -1 -1 -2 2 1 -1 0 -1 0 1 0 0 -2 1 -1 -1 -2 -1 2 2 6 -1 -2 1 0 0 -1 1 -1 1 -2 0 1 2 0 2 2 1 1 -1 -1 -1 -1 0]
 [0 -2 1 -2 0 0 0 0 0 0 -1 -1 0 1 -1 -1 1 1 0 0 1 2 -1 0 -1 6 0 -2 0 1 1 2 0 0 0 0 -1 0 0 -2 -2 1 -1 -1 2 0 -1 0]
 [2 -1 0 -2 -1 1 2 0 -1 1 0 0 -1 1 -1 -1 -1 0 0 1 1 1 0 -2 -2 0 6 0 -1 0 -2 -1 0 0 -1 1 -1 1 1 0 -2 0 0 -1 0 3 1 -2]
 [2 2 0 1 2 -1 1 0 -2 0 1 0 -1 -1 3 -1 -1 0 -1 -1 0 0 0 -1 1 -2 0 6 -2 0 -1 -1 -1 0 -1 0 0 -1 -2 0 1 1 0 -1 -1 0 1 -1]
 [-1 0 1 -1 0 2 -2 -1 1 -2 -1 1 1 0 -1 1 0 -1 0 1 0 1 1 0 0 0 -1 -2 6 0 0 0 0 0 2 0 1 0 0 -1 0 0 0 -1 1 1 1 1]
 [0 0 2 -2 -1 0 -1 1 1 1 0 -1 -1 -1 0 0 0 1 -1 -1 -1 2 0 1 0 1 0 0 0 6 1 0 1 -2 1 0 -1 1 0 1 -1 0 0 -1 2 1 0 -1]
 [-2 -1 -1 1 0 0 -1 0 0 2 -2 -1 -1 -2 1 1 2 1 0 0 0 -1 -1 1 -1 1 -2 -1 0 1 6 1 1 0 1 -1 2 -1 -1 0 -1 -2 1 1 1 -1 -1 1]
 [-1 0 1 -1 0 -1 -3 1 0 0 -2 -1 1 2 -1 1 -1 1 -1 1 0 2 1 2 1 2 -1 -1 0 0 1 6 -1 -2 0 -2 0 0 -1 0 0 1 0 0 0 -1 -1 1]
 [-1 1 1 1 0 1 1 0 3 0 1 0 2 -2 -1 1 0 -2 0 0 -2 0 0 1 -1 0 0 -1 0 1 1 -1 6 0 1 0 1 -1 0 1 -1 0 -1 1 1 -1 0 0]
 [0 0 -2 0 0 2 2 0 1 -1 -1 -1 0 -1 0 0 1 -2 2 0 -1 -1 -1 -1 1 0 0 0 0 -2 0 -2 0 6 -1 3 1 0 -1 1 -1 0 1 1 0 -1 0 -1]
 [-2 -1 1 1 1 1 -1 -1 0 0 0 0 1 -2 1 2 0 0 -1 1 1 1 0 1 -2 0 -1 -1 2 1 1 0 1 -1 6 -1 1 -2 0 0 0 0 1 0 1 0 -1 2]
 [1 0 -1 -1 -1 2 1 -1 0 -1 0 0 -1 -1 0 -1 1 -1 0 2 -1 0 -1 -1 0 0 1 0 0 0 -1 -2 0 3 -1 6 0 1 -1 1 -1 0 0 1 0 0 1 -2]
 [-2 0 0 1 0 1 -2 -1 0 1 -2 -1 1 -3 0 3 -1 -1 0 1 -1 -1 0 2 1 -1 -1 0 1 -1 2 0 1 1 1 0 6 -1 0 2 0 -1 1 0 0 0 0 1]
 [1 -1 0 -2 -2 -1 -1 1 -1 0 0 0 -2 2 -1 -2 0 1 0 0 0 0 1 0 2 0 1 -1 0 1 -1 0 -1 0 -2 1 -1 6 1 0 0 1 -1 -1 0 0 1 0]
 [-1 -2 -1 1 -2 0 1 1 0 2 0 -1 1 1 -2 1 0 1 2 0 -1 -2 -1 0 0 0 1 -2 0 0 -1 -1 0 -1 0 -1 0 1 6 1 -1 -1 0 0 -1 2 -1 0]
 [-2 1 -1 1 -2 1 -1 2 2 2 -1 -2 0 -1 0 3 -1 0 1 1 -3 -1 0 2 2 -2 0 0 -1 1 0 0 1 1 0 1 2 0 1 6 0 -1 2 1 -1 0 -1 -1]
 [0 2 1 1 1 -2 -2 -1 1 -2 2 2 1 0 2 -1 0 0 -1 -1 1 0 2 1 2 -2 -2 1 0 -1 -1 0 -1 -1 0 -1 0 0 -1 0 6 -1 0 0 -1 -2 0 2]
 [3 2 3 -2 3 -2 1 -1 -1 -3 3 1 0 0 -1 0 -2 -2 -2 -2 0 1 2 2 1 1 0 1 0 0 -2 1 0 0 0 0 -1 1 -1 -1 -1 6 -2 -2 1 -1 1 -1]
 [-3 -1 -3 2 -1 2 0 3 0 2 -2 -2 -1 -1 2 2 1 1 0 1 0 -1 -2 1 1 -1 0 0 0 0 1 0 -1 1 1 0 1 -1 0 2 0 -2 6 1 0 0 -2 0]
 [-3 -2 -3 2 -2 2 0 1 2 1 -1 -2 1 -1 0 1 3 -1 2 1 0 -1 -2 0 -1 -1 -1 -1 -1 -1 1 0 1 1 0 1 0 -1 0 1 0 -2 1 6 0 -1 0 1]
 [-1 -2 1 -2 0 2 0 0 1 -1 0 -1 0 -2 0 0 1 -1 -1 -1 2 2 0 2 -1 2 0 -1 1 2 1 0 1 0 1 0 0 0 -1 -1 -1 1 0 0 6 0 1 0]
 [0 -2 0 -1 -2 2 0 0 -1 2 -1 -1 0 0 -1 0 -1 1 0 1 1 0 0 -2 -1 0 3 0 1 1 -1 -1 -1 -1 0 0 0 0 2 0 -2 -1 0 -1 0 6 1 -1]
 [2 0 1 -2 1 1 0 -3 0 -2 1 1 0 -1 0 -1 1 -2 0 0 1 1 1 0 -1 -1 1 1 1 0 -1 -1 0 0 -1 1 0 1 -1 -1 0 1 -2 0 1 1 6 -1]
 [-2 -1 0 2 0 -1 -2 0 0 0 -1 1 1 0 1 0 0 0 -1 0 1 0 1 0 0 0 -2 -1 1 -1 1 1 0 -1 2 -2 1 0 0 -1 2 -1 0 1 0 -1 -1 6]]
\end{Verbatim}

\paragraph{$\mathbf{T}$} ($8\times48$):
\begin{Verbatim}[fontsize=\tiny,breaklines=true,breakanywhere=true]
[[5 -6 2 1 0 1 -4 3 1 0 1 1 -1 0 0 -1 -1 -2 0 0 0 -1 -1 -1 -1 -1 -1 0 0 0 1 1 1 1 0 0 0 -2 1 1 1 1 1 -1 -1 -1 0 -1]
 [-1 -2 -3 -2 1 0 0 2 0 -1 2 2 0 -1 0 1 -1 0 0 0 -1 1 1 0 0 1 0 1 0 0 1 0 0 0 0 0 0 0 0 0 0 -1 -1 0 -1 0 0 0]
 [-5 4 1 0 -2 -4 7 -3 0 -3 -3 -3 0 -1 1 1 1 2 -1 2 1 1 1 1 0 0 0 0 1 0 -1 -1 -1 -1 -1 -1 0 2 -1 0 -1 -1 -1 1 0 1 0 0]
 [-5 3 0 0 0 0 2 -4 0 2 -1 -1 0 3 0 -1 0 -1 -1 0 0 0 0 1 1 0 1 0 0 1 0 -1 -1 0 0 0 0 0 0 -1 0 1 0 0 0 0 0 0]
 [11 -4 -2 0 1 3 -8 7 -1 0 4 3 1 -4 -1 1 0 1 3 -2 -1 0 -1 -2 -1 0 -1 -1 -1 -2 0 2 2 0 1 1 0 -1 0 1 0 -1 1 -1 1 0 0 1]
 [1 -2 6 2 1 2 -2 0 1 0 -3 -1 -1 3 0 -2 1 -2 -1 0 1 -3 -1 -1 0 -1 0 0 -1 0 1 -1 0 0 0 0 0 -1 1 1 1 2 1 0 0 -1 0 -1]
 [-7 4 -4 -3 -3 -3 6 -5 -1 0 1 0 0 0 2 3 -1 1 -1 -1 -1 3 1 1 1 1 0 1 1 0 -1 1 0 0 0 0 0 2 0 -2 -1 -2 -1 1 0 1 0 0]
 [-1 -2 3 3 3 -2 2 1 3 0 -1 -3 -1 0 -2 -5 -1 0 0 5 2 -2 1 2 -2 -1 0 0 2 3 -1 -1 -2 1 -2 -1 1 -1 -1 1 0 1 0 0 -1 -1 0 0]]
\end{Verbatim}

\subsection{Dimension 39}

Certificate \texttt{records/dim39-delta18/P48p-adc3124f}; the ambient is
$P_{48p}$ and $\mathbf{T}\mathbf{G}\mathbf{T}^{\mathsf{T}} = \mathbf{K}_9$.

\paragraph{$\mathbf{T}$} ($9\times48$):
\begin{Verbatim}[fontsize=\tiny,breaklines=true,breakanywhere=true]
[[5 -6 2 1 0 1 -4 3 1 0 1 1 -1 0 0 -1 -1 -2 0 0 0 -1 -1 -1 -1 -1 -1 0 0 0 1 1 1 1 0 0 0 -2 1 1 1 1 1 -1 -1 -1 0 -1]
 [-5 4 1 0 -2 -4 7 -3 0 -3 -3 -3 0 -1 1 1 1 2 -1 2 1 1 1 1 0 0 0 0 1 0 -1 -1 -1 -1 -1 -1 0 2 -1 0 -1 -1 -1 1 0 1 0 0]
 [-10 7 1 0 -2 -4 9 -7 0 -1 -4 -4 0 2 1 0 1 1 -2 2 1 1 1 2 1 0 1 0 1 1 -1 -2 -2 -1 -1 -1 0 2 -1 -1 -1 0 -1 1 0 1 0 0]
 [0 1 -4 -2 0 -1 1 2 -1 -2 2 1 1 -3 0 2 0 2 1 0 -1 2 1 0 0 1 0 0 0 -1 0 0 0 -1 0 0 0 1 -1 0 -1 -2 -1 0 0 1 0 1]
 [-11 5 -2 -2 -1 -4 9 -5 0 -2 -2 -2 0 1 1 1 0 1 -2 2 0 2 2 2 1 1 1 1 1 1 0 -2 -2 -1 -1 -1 0 2 -1 -1 -1 -1 -2 1 -1 1 0 0]
 [1 -1 2 0 1 1 -1 2 0 -2 -1 0 0 0 0 0 1 0 0 0 0 -1 0 -1 0 0 0 0 -1 -1 1 -1 0 -1 0 0 0 0 0 1 0 0 0 0 0 0 0 0]
 [-6 3 -2 -3 -2 -2 5 -3 -1 -2 0 0 0 0 2 3 0 1 -1 -1 -1 2 1 0 1 1 0 1 0 -1 0 0 0 -1 0 0 0 2 0 -1 -1 -2 -1 1 0 1 0 0]
 [5 -8 6 1 0 0 -3 4 2 -3 1 -1 -2 0 1 -2 -1 -2 0 1 0 -2 -1 -2 -2 -2 -2 0 0 -1 1 1 1 0 -1 0 1 -2 1 2 0 0 2 -1 -1 -1 0 -1]
 [-12 3 7 0 -2 -4 10 -7 1 -3 -4 -6 -2 5 2 -2 0 -1 -3 3 1 -1 1 1 0 -1 0 1 1 1 0 -2 -2 -1 -2 -1 1 1 0 0 -1 0 0 1 -1 0 0 -1]]
\end{Verbatim}

\subsection{Dimension 38}

Certificate \texttt{records/dim38-delta9/P48p-a17f250f}; the ambient is
$P_{48p}$ and $\mathbf{T}\mathbf{G}\mathbf{T}^{\mathsf{T}} = \mathbf{K}_{10}$.

\paragraph{$\mathbf{T}$} ($10\times48$):
\begin{Verbatim}[fontsize=\tiny,breaklines=true,breakanywhere=true]
[[5 -6 2 1 0 1 -4 3 1 0 1 1 -1 0 0 -1 -1 -2 0 0 0 -1 -1 -1 -1 -1 -1 0 0 0 1 1 1 1 0 0 0 -2 1 1 1 1 1 -1 -1 -1 0 -1]
 [-5 4 1 0 -2 -4 7 -3 0 -3 -3 -3 0 -1 1 1 1 2 -1 2 1 1 1 1 0 0 0 0 1 0 -1 -1 -1 -1 -1 -1 0 2 -1 0 -1 -1 -1 1 0 1 0 0]
 [-10 7 1 0 -2 -4 9 -7 0 -1 -4 -4 0 2 1 0 1 1 -2 2 1 1 1 2 1 0 1 0 1 1 -1 -2 -2 -1 -1 -1 0 2 -1 -1 -1 0 -1 1 0 1 0 0]
 [0 1 -4 -2 0 -1 1 2 -1 -2 2 1 1 -3 0 2 0 2 1 0 -1 2 1 0 0 1 0 0 0 -1 0 0 0 -1 0 0 0 1 -1 0 -1 -2 -1 0 0 1 0 1]
 [-11 5 -2 -2 -1 -4 9 -5 0 -2 -2 -2 0 1 1 1 0 1 -2 2 0 2 2 2 1 1 1 1 1 1 0 -2 -2 -1 -1 -1 0 2 -1 -1 -1 -1 -2 1 -1 1 0 0]
 [1 -1 2 0 1 1 -1 2 0 -2 -1 0 0 0 0 0 1 0 0 0 0 -1 0 -1 0 0 0 0 -1 -1 1 -1 0 -1 0 0 0 0 0 1 0 0 0 0 0 0 0 0]
 [-6 3 -2 -3 -2 -2 5 -3 -1 -2 0 0 0 0 2 3 0 1 -1 -1 -1 2 1 0 1 1 0 1 0 -1 0 0 0 -1 0 0 0 2 0 -1 -1 -2 -1 1 0 1 0 0]
 [5 -8 6 1 0 0 -3 4 2 -3 1 -1 -2 0 1 -2 -1 -2 0 1 0 -2 -1 -2 -2 -2 -2 0 0 -1 1 1 1 0 -1 0 1 -2 1 2 0 0 2 -1 -1 -1 0 -1]
 [-12 3 7 0 -2 -4 10 -7 1 -3 -4 -6 -2 5 2 -2 0 -1 -3 3 1 -1 1 1 0 -1 0 1 1 1 0 -2 -2 -1 -2 -1 1 1 0 0 -1 0 0 1 -1 0 0 -1]
 [-2 1 -5 -2 0 -2 1 -1 -1 0 1 0 1 -2 0 0 -1 1 0 1 -1 1 2 0 -1 1 0 0 1 1 -1 0 0 0 -1 -1 0 0 0 0 0 0 0 0 0 0 0 0]]
\end{Verbatim}

\subsection{Dimension 43}

Certificate \texttt{records/dim43-delta226/P48m-1f02996f}. The ambient is
$P_{48m}$; $\mathbf{T}\mathbf{G}\mathbf{T}^{\mathsf{T}} = \mathbf{K}$ is the
Gram matrix of Section~\ref{sec:clusters} and
$\mathbf{W}\mathbf{G}\mathbf{T}^{\mathsf{T}} = [\,0; I_5\,]$. Its Gram matrix
$\mathbf{G}$ is the catalogue entry \texttt{P48m} \cite{nebe2014fourth,
nebe2026catalogue} reproduced verbatim; the dimension-$45$ packing below uses
the same ambient and the same $\mathbf{G}$.

\paragraph{$\mathbf{G}$} (the Gram matrix of $P_{48m}$):
\begin{Verbatim}[fontsize=\tiny,breaklines=true,breakanywhere=true]
[[6 -2 1 2 -1 -1 -3 1 1 -1 2 -3 2 0 1 -2 -3 -2 2 -2 -3 -2 -1 1 0 2 1 0 -2 2 -2 -2 3 2 1 -2 1 3 3 2 -3 3 0 -3 -2 0 2 -2]
 [-2 8 -1 -2 -2 1 0 -3 -2 3 2 2 2 -2 3 0 -2 3 -4 0 4 4 -3 1 2 1 2 0 2 0 -1 3 -4 -1 -4 2 0 -4 -2 -3 1 0 0 -1 3 3 -4 3]
 [1 -1 6 -2 0 -3 0 2 -2 2 -1 -3 -2 1 -2 2 -3 -2 -2 2 -3 -1 -1 0 -1 2 1 2 -3 0 1 -1 0 -1 -1 0 0 0 -2 2 2 -2 2 -1 0 -1 2 2]
 [2 -2 -2 6 1 2 -3 1 0 -3 0 1 1 1 2 -3 1 -2 4 -2 0 -3 1 2 -2 1 0 -1 2 1 1 -2 0 2 2 -2 -1 1 1 0 -1 3 -1 -1 -1 -2 2 -1]
 [-1 -2 0 1 6 -2 -2 2 -2 -2 1 2 -3 -1 -1 0 2 -3 0 1 -1 -3 2 -2 -3 0 -1 2 1 -2 -1 0 0 -2 3 -1 -3 -2 1 2 1 -2 2 2 -2 -3 -1 -2]
 [-1 1 -3 2 -2 6 2 0 1 0 -1 2 1 2 1 0 1 1 1 -2 3 2 0 2 1 -1 1 -2 2 0 1 1 0 0 -2 0 0 1 1 -2 1 2 -3 -1 0 1 1 1]
 [-3 0 0 -3 -2 2 8 -1 2 3 -1 1 0 1 -3 4 2 2 -2 2 3 3 2 1 1 -2 -2 1 -1 -1 0 0 2 -2 -2 0 2 0 -2 0 0 -2 -1 2 0 0 1 0]
 [1 -3 2 1 2 0 -1 6 -1 0 -1 0 -1 0 -1 -1 0 -2 2 1 -2 -3 0 1 -2 -1 -1 0 -1 -1 0 0 1 0 2 -2 0 1 0 1 3 -2 2 0 -3 -3 2 0]
 [1 -2 -2 0 -2 1 2 -1 6 -2 -1 0 2 1 -1 0 1 2 3 0 1 2 2 1 0 -1 -2 0 0 2 -1 0 3 2 2 -1 1 4 2 1 -3 1 -1 0 -1 2 2 -3]
 [-1 3 2 -3 -2 0 3 0 -2 6 1 0 0 0 -1 1 -1 1 -3 2 1 1 -2 1 1 0 1 1 -2 0 0 0 0 -2 -4 1 2 -2 -2 -1 1 -1 1 -1 1 1 -1 3]
 [2 2 -1 0 1 -1 -1 -1 -1 1 6 1 2 -3 2 -1 -1 0 -1 -1 0 0 0 0 1 0 0 2 -1 1 -3 0 0 -1 -1 0 0 -2 1 1 -3 2 1 -1 -1 0 -3 -1]
 [-3 2 -3 1 2 2 1 0 0 0 1 6 0 -1 1 0 3 1 0 0 4 1 2 0 -1 -2 -2 0 3 -1 0 1 -1 -1 0 0 -1 -3 -1 -1 1 -1 0 2 0 -1 -2 0]
 [2 2 -2 1 -3 1 0 -1 2 0 2 0 6 -1 2 -2 -1 2 1 -2 1 2 0 3 1 -1 -1 -1 0 1 -1 0 0 2 0 0 2 1 0 -1 -2 3 0 -1 0 2 -1 -1]
 [0 -2 1 1 -1 2 1 0 1 0 -3 -1 -1 6 -3 1 0 -2 1 0 0 0 0 1 -1 1 2 0 0 1 2 -2 1 0 0 0 0 2 1 1 1 2 -2 -1 0 2 3 1]
 [1 3 -2 2 -1 1 -3 -1 -1 -1 2 1 2 -3 6 -2 -1 2 0 -2 1 1 -2 1 1 1 0 -1 2 0 0 2 -3 1 -1 0 0 -1 0 -2 -1 1 0 -1 1 0 -2 1]
 [-2 0 2 -3 0 0 4 -1 0 1 -1 0 -2 1 -2 6 0 0 -3 1 0 3 1 -1 0 0 0 1 0 -2 0 0 1 -2 -2 1 0 0 -1 1 1 -2 -1 1 1 0 1 1]
 [-3 -2 -3 1 2 1 2 0 1 -1 -1 3 -1 0 -1 0 6 1 2 1 1 -1 3 -1 -1 -2 -3 -1 1 -1 1 0 1 -1 2 0 0 -1 0 -1 0 -1 0 3 0 -1 -1 -2]
 [-2 3 -2 -2 -3 1 2 -2 2 1 0 1 2 -2 2 0 1 6 0 1 2 4 -1 1 2 -1 -1 -1 1 0 0 3 -1 1 -1 2 2 0 -1 -2 -1 -1 0 1 2 2 -2 1]
 [2 -4 -2 4 0 1 -2 2 3 -3 -1 0 1 1 0 -3 2 0 8 0 -2 -2 1 2 -2 0 -1 -1 1 3 0 -1 2 4 3 -1 1 3 1 0 -1 3 -1 0 -1 -1 2 -2]
 [-2 0 2 -2 1 -2 2 1 0 2 -1 0 -2 0 -2 1 1 1 0 6 1 0 0 1 -1 1 -1 3 0 0 0 0 0 -1 1 -1 0 -1 -2 1 1 -3 2 1 0 0 1 0]
 [-3 4 -3 0 -1 3 3 -2 1 1 0 4 1 0 1 0 1 2 -2 1 8 2 0 1 1 -1 -1 0 3 0 0 1 -2 -1 -1 -1 0 -2 -2 -2 0 -1 -1 1 0 1 -1 0]
 [-2 4 -1 -3 -3 2 3 -3 2 1 0 1 2 0 1 3 -1 4 -2 0 2 8 -1 1 2 0 0 0 2 0 -1 3 -1 0 -3 3 1 0 0 -1 0 0 -2 0 2 4 -1 2]
 [-1 -3 -1 1 2 0 2 0 2 -2 0 2 0 0 -2 1 3 -1 1 0 0 -1 6 0 -1 -2 -3 1 0 -1 1 -2 2 -1 2 -1 -2 0 0 2 -1 0 0 2 -1 -2 1 -3]
 [1 1 0 2 -2 2 1 1 1 1 0 0 3 1 1 -1 -1 1 2 1 1 1 0 6 -1 1 0 1 1 1 0 0 0 1 0 -1 1 1 -1 0 0 2 0 -1 0 0 2 1]
 [0 2 -1 -2 -3 1 1 -2 0 1 1 -1 1 -1 1 0 -1 2 -2 -1 1 2 -1 -1 6 -1 1 -2 -1 1 0 0 -1 0 -2 1 1 -1 0 -1 -1 1 -2 -1 1 2 -1 1]
 [2 1 2 1 0 -1 -2 -1 -1 0 0 -2 -1 1 1 0 -2 -1 0 1 -1 0 -2 1 -1 6 2 2 0 1 0 0 0 0 -1 0 -1 0 1 1 -1 1 0 -2 1 1 1 0]
 [1 2 1 0 -1 1 -2 -1 -2 1 0 -2 -1 2 0 0 -3 -1 -1 -1 -1 0 -3 0 1 2 6 0 0 1 0 0 -1 0 -2 1 -1 0 1 0 1 2 -1 -3 1 2 0 2]
 [0 0 2 -1 2 -2 1 0 0 1 2 0 -1 0 -1 1 -1 -1 -1 3 0 0 1 1 -2 2 0 6 -1 0 -1 0 0 -2 0 -1 -2 -1 0 3 -1 -1 2 0 -1 0 0 -1]
 [-2 2 -3 2 1 2 -1 -1 0 -2 -1 3 0 0 2 0 1 1 1 0 3 2 0 1 -1 0 0 -1 6 -1 0 1 -2 1 1 0 -1 -1 0 -1 1 0 -2 1 1 0 0 1]
 [2 0 0 1 -2 0 -1 -1 2 0 1 -1 1 1 0 -2 -1 0 3 0 0 0 -1 1 1 1 1 0 -1 6 -1 -1 0 2 0 0 1 1 0 0 -2 3 -1 -2 0 2 1 0]
 [-2 -1 1 1 -1 1 0 0 -1 0 -3 0 -1 2 0 0 1 0 0 0 0 -1 1 0 0 0 0 -1 0 -1 6 -1 -2 0 -1 1 -1 0 -2 -1 2 0 0 0 2 0 1 2]
 [-2 3 -1 -2 0 1 0 0 0 0 0 1 0 -2 2 0 0 3 -1 0 1 3 -2 0 0 0 0 0 1 -1 -1 6 -2 -1 -1 2 0 -1 0 -2 2 -2 1 1 1 1 -3 1]
 [3 -4 0 0 0 0 2 1 3 0 0 -1 0 1 -3 1 1 -1 2 0 -2 -1 2 0 -1 0 -1 0 -2 0 -2 -2 8 0 1 -2 1 4 4 2 -2 1 -1 -1 -2 -1 3 -3]
 [2 -1 -1 2 -2 0 -2 0 2 -2 -1 -1 2 0 1 -2 -1 1 4 -1 -1 0 -1 1 0 0 0 -2 1 2 0 -1 0 6 1 0 2 2 0 0 -1 2 -1 0 0 0 2 0]
 [1 -4 -1 2 3 -2 -2 2 2 -4 -1 0 0 0 -1 -2 2 -1 3 1 -1 -3 2 0 -2 -1 -2 0 1 0 -1 -1 1 1 8 -3 -1 2 1 2 -1 -1 2 2 -2 -1 1 -4]
 [-2 2 0 -2 -1 0 0 -2 -1 1 0 0 0 0 0 1 0 2 -1 -1 -1 3 -1 -1 1 0 1 -1 0 0 1 2 -2 0 -3 6 1 -1 -1 -1 1 0 -1 1 2 2 -2 2]
 [1 0 0 -1 -3 0 2 0 1 2 0 -1 2 0 0 0 0 2 1 0 0 1 -2 1 1 -1 -1 -2 -1 1 -1 0 1 2 -1 1 6 1 -1 -1 -1 0 0 1 0 0 0 1]
 [3 -4 0 1 -2 1 0 1 4 -2 -2 -3 1 2 -1 0 -1 0 3 -1 -2 0 0 1 -1 0 0 -1 -1 1 0 -1 4 2 2 -1 1 8 3 1 -2 1 -1 -2 -1 1 4 -2]
 [3 -2 -2 1 1 1 -2 0 2 -2 1 -1 0 1 0 -1 0 -1 1 -2 -2 0 0 -1 0 1 1 0 0 0 -2 0 4 0 1 -1 -1 3 8 2 -3 2 -1 -2 -2 2 1 -2]
 [2 -3 2 0 2 -2 0 1 1 -1 1 -1 -1 1 -2 1 -1 -2 0 1 -2 -1 2 0 -1 1 0 3 -1 0 -1 -2 2 0 2 -1 -1 1 2 6 -2 -1 1 0 -3 -1 3 -2]
 [-3 1 2 -1 1 1 0 3 -3 1 -3 1 -2 1 -1 1 0 -1 -1 1 0 0 -1 0 -1 -1 1 -1 1 -2 2 2 -2 -1 -1 1 -1 -2 -3 -2 8 -2 0 1 1 -1 0 3]
 [3 0 -2 3 -2 2 -2 -2 1 -1 2 -1 3 2 1 -2 -1 -1 3 -3 -1 0 0 2 1 1 2 -1 0 3 0 -2 1 2 -1 0 0 1 2 -1 -2 8 -3 -3 1 2 0 0]
 [0 0 2 -1 2 -3 -1 2 -1 1 1 0 0 -2 0 -1 0 0 -1 2 -1 -2 0 0 -2 0 -1 2 -2 -1 0 1 -1 -1 2 -1 0 -1 -1 1 0 -3 6 1 -1 -1 -2 -1]
 [-3 -1 -1 -1 2 -1 2 0 0 -1 -1 2 -1 -1 -1 1 3 1 0 1 1 0 2 -1 -1 -2 -3 0 1 -2 0 1 -1 0 2 1 1 -2 -2 0 1 -3 1 6 0 -2 -2 -1]
 [-2 3 0 -1 -2 0 0 -3 -1 1 -1 0 0 0 1 1 0 2 -1 0 0 2 -1 0 1 1 1 -1 1 0 2 1 -2 0 -2 2 0 -1 -2 -3 1 1 -1 0 6 2 -2 3]
 [0 3 -1 -2 -3 1 0 -3 2 1 0 -1 2 2 0 0 -1 2 -1 0 1 4 -2 0 2 1 2 0 0 2 0 1 -1 0 -1 2 0 1 2 -1 -1 2 -1 -2 2 8 -1 0]
 [2 -4 2 2 -1 1 1 2 2 -1 -3 -2 -1 3 -2 1 -1 -2 2 1 -1 -1 1 2 -1 1 0 0 0 1 1 -3 3 2 1 -2 0 4 1 3 0 0 -2 -2 -2 -1 8 0]
 [-2 3 2 -1 -2 1 0 0 -3 3 -1 0 -1 1 1 1 -2 1 -2 0 0 2 -3 1 1 0 2 -1 1 0 2 1 -3 0 -4 2 1 -2 -2 -2 3 0 -1 -1 3 0 0 8]]
\end{Verbatim}

\paragraph{$\mathbf{T}$} ($5\times48$):
\begin{Verbatim}[fontsize=\tiny,breaklines=true,breakanywhere=true]
[[-1 1 0 0 0 0 0 0 -1 0 0 -1 0 1 1 0 0 -1 0 1 0 0 1 0 0 -1 0 0 0 0 0 1 1 1 0 0 0 0 0 0 -1 0 0 0 0 0 0 0]
 [-1 -2 7 -2 5 2 -2 -6 1 4 1 2 3 -1 -1 0 1 0 4 -2 1 0 0 1 2 1 1 -1 -1 -2 0 1 -2 1 1 -2 0 1 1 -2 0 -2 -2 0 -1 0 1 0]
 [-1 0 -7 1 -3 -3 -1 6 0 -1 3 -5 -5 1 0 0 0 1 -2 -2 2 4 1 1 -3 0 0 0 -1 1 1 -2 1 0 0 0 1 -2 -1 1 1 0 1 -1 2 0 0 -1]
 [2 -2 -2 1 1 0 1 -2 -3 0 -4 3 2 0 -1 2 -3 3 0 1 -1 -3 1 -1 1 1 -2 0 0 1 -2 1 -1 -1 -1 0 0 1 0 1 1 1 1 -1 -1 1 -1 1]
 [-1 3 -8 1 -8 -2 -3 7 -2 -3 1 -5 -6 0 -1 1 2 -1 -3 -2 2 4 0 1 -3 -1 -1 3 -1 2 2 -2 3 0 1 1 1 -2 -1 2 0 0 1 -1 1 -1 -1 -1]]
\end{Verbatim}

\paragraph{$\mathbf{W}$} ($6\times48$):
\begin{Verbatim}[fontsize=\tiny,breaklines=true,breakanywhere=true]
[[0 0 0 0 0 0 0 0 0 0 0 0 0 0 0 0 0 0 0 0 0 0 0 0 0 0 0 0 0 0 0 0 0 0 0 0 0 0 0 0 0 0 0 0 0 0 0 0]
 [9 7 1 0 8 6 0 -5 2 4 -4 -2 2 -2 -2 3 1 0 5 0 -1 -1 -3 -1 2 -2 -4 2 0 0 2 -1 -1 -1 0 1 -2 1 0 1 1 -1 1 1 0 -1 0 1]
 [6 8 8 -1 12 8 -1 -8 2 6 -6 0 5 -2 1 3 2 -1 7 0 -1 -3 -2 -2 4 -2 -3 2 0 -1 0 0 -1 0 0 0 -1 3 0 1 0 0 0 1 -1 0 0 1]
 [8 6 2 0 6 5 0 -3 4 3 -2 -2 1 -1 0 2 2 -1 4 1 0 0 -2 -1 1 -2 -2 1 0 -1 1 -2 -1 -1 0 2 -2 0 0 1 1 -1 1 0 1 -1 0 1]
 [8 9 -1 2 6 3 1 -2 2 4 -4 -4 1 0 -1 4 2 0 5 -1 -1 -1 -2 -2 2 -2 -4 2 0 0 1 -1 -1 -1 0 1 -2 1 0 1 1 -1 1 1 0 -1 0 1]
 [8 4 -2 1 4 4 -2 -2 1 2 -3 -2 2 -2 -4 4 2 1 2 0 0 0 -3 0 1 -1 -3 2 0 1 2 0 -1 -1 0 1 -2 0 0 1 1 -1 1 1 0 -1 0 1]]
\end{Verbatim}

\subsection{Dimension 45}

Certificate \texttt{records/dim45-kissing7379838/P48m-d914e7d8}. The ambient is
again $P_{48m}$, so $\mathbf{G}$ is the matrix printed above. As in dimension
$43$, the basis encoded by $\mathbf{T}$ is the one whose Gram matrix is the
$\mathbf{K}$ of Section~\ref{sec:kissing45}, so
$\mathbf{W}\mathbf{G}\mathbf{T}^{\mathsf{T}} = [\,0; I_3\,]$: the cluster is
$\{0\}$ together with the dual basis $m_1, m_2, m_3$, and no separate cluster
matrix is needed.

\paragraph{$\mathbf{T}$} ($3\times48$):
\begin{Verbatim}[fontsize=\tiny,breaklines=true,breakanywhere=true]
[[4 0 -6 0 1 -1 1 0 0 -2 0 0 -1 1 -1 1 -2 2 -1 1 -1 0 -1 1 -1 0 -1 -1 -1 1 1 -1 0 -1 -1 0 -1 -1 -1 0 1 -1 1 0 0 0 0 1]
 [2 0 6 -1 2 2 3 -3 1 0 -2 4 3 -1 1 -2 1 -2 1 2 -2 -2 -1 -2 2 0 1 0 2 -1 -1 1 -1 0 0 1 -1 2 0 -1 -1 1 0 1 -1 0 0 1]
 [-3 0 -3 2 -3 0 -1 2 0 -1 1 -1 -1 0 -1 1 -2 0 -2 1 -1 0 0 0 -1 0 -1 0 0 1 1 0 1 0 0 0 1 -1 0 1 0 0 0 -1 0 0 -1 -1]]
\end{Verbatim}

\paragraph{$\mathbf{W}$} ($4\times48$):
\begin{Verbatim}[fontsize=\tiny,breaklines=true,breakanywhere=true]
[[0 0 0 0 0 0 0 0 0 0 0 0 0 0 0 0 0 0 0 0 0 0 0 0 0 0 0 0 0 0 0 0 0 0 0 0 0 0 0 0 0 0 0 0 0 0 0 0]
 [1 1 -7 8 2 -7 0 5 1 4 -4 -4 1 2 1 5 -1 1 0 -1 3 3 0 -2 2 -1 0 1 -2 2 1 1 1 -1 0 1 -1 -1 0 2 1 0 1 0 1 -1 -1 0]
 [0 3 -1 6 5 -3 2 4 2 4 -4 -4 1 3 3 3 0 -1 -1 -1 1 0 0 -3 4 -2 -1 3 2 2 0 2 3 1 0 2 -1 0 -1 1 -1 1 1 0 0 0 -2 0]
 [1 5 -10 12 -1 -10 4 9 0 3 -6 -6 -3 3 2 5 -3 -1 -2 -1 2 1 1 -3 2 -3 -2 3 1 3 1 2 4 0 0 3 -1 -1 -1 3 -1 2 2 -1 1 0 -3 0]]
\end{Verbatim}

\bibliographystyle{plain}
\bibliography{refs}

@article{conway1982laminated,
  title   = {Laminated lattices},
  author  = {Conway, John H. and Sloane, Neil J. A.},
  journal = {Annals of Mathematics},
  series  = {2},
  volume  = {116},
  number  = {3},
  pages   = {593--620},
  year    = {1982},
  doi     = {10.2307/2007025}
}

@article{conway1996antipode,
  title     = {The antipode construction for sphere packings},
  author    = {Conway, John H. and Sloane, Neil J. A.},
  journal   = {Inventiones mathematicae},
  volume    = {123},
  pages     = {309--313},
  year      = {1996},
  publisher = {Springer}
}

@book{conway1999sphere,
  title     = {Sphere Packings, Lattices and Groups},
  author    = {Conway, John H. and Sloane, Neil J. A.},
  year      = {1999},
  publisher = {Springer-Verlag},
  address   = {New York},
  series    = {Grundlehren der mathematischen Wissenschaften},
  volume    = {290},
  edition   = {3rd},
  isbn      = {978-0-387-98585-5},
  doi       = {10.1007/978-1-4757-6568-7}
}

@misc{chen2025antipode,
  title        = {New sphere packings from the antipode construction},
  author       = {Chen, Ruitao and Hu, Jiachen and Li, Binghui and Wang, Liwei and Wu, Tianyi},
  year         = {2025},
  howpublished = {Preprint, arXiv:2505.02394}
}

@article{hales2005proof,
  title   = {A proof of the {K}epler conjecture},
  author  = {Hales, Thomas C.},
  journal = {Annals of Mathematics},
  volume  = {162},
  number  = {3},
  pages   = {1065--1185},
  year    = {2005}
}

@article{viazovska2017sphere,
  title   = {The sphere packing problem in dimension 8},
  author  = {Viazovska, Maryna S.},
  journal = {Annals of Mathematics},
  volume  = {185},
  number  = {3},
  pages   = {991--1015},
  year    = {2017}
}

@article{cohn2017sphere,
  title   = {The sphere packing problem in dimension 24},
  author  = {Cohn, Henry and Kumar, Abhinav and Miller, Stephen D. and Radchenko, Danylo and Viazovska, Maryna},
  journal = {Annals of Mathematics},
  volume  = {185},
  number  = {3},
  pages   = {1017--1033},
  year    = {2017}
}

@article{pless1972symmetry,
  title   = {Symmetry codes over {$GF(3)$} and new five-designs},
  author  = {Pless, Vera},
  journal = {Journal of Combinatorial Theory, Series A},
  volume  = {12},
  number  = {1},
  pages   = {119--142},
  year    = {1972}
}

@article{nebe2014fourth,
  title   = {A fourth extremal even unimodular lattice of dimension 48},
  author  = {Nebe, Gabriele},
  journal = {Discrete Mathematics},
  volume  = {331},
  pages   = {133--136},
  year    = {2014},
  doi     = {10.1016/j.disc.2014.05.011}
}

@misc{nebe2026catalogue,
  title        = {A Catalogue of Lattices},
  author       = {Nebe, Gabriele and Sloane, Neil J. A.},
  year         = {2026},
  howpublished = {\url{https://www.math.rwth-aachen.de/~Gabriele.Nebe/LATTICES/}},
  note         = {Accessed July 2026}
}

@misc{cohn2026packing,
  title        = {Sphere packing},
  author       = {Cohn, Henry},
  year         = {2026},
  howpublished = {Online table of the best sphere packings known, \url{https://cohn.mit.edu/sphere-packing/}},
  note         = {Accessed July 2026}
}

@article{leech1971sphere,
  author  = {Leech, John and Sloane, N. J. A.},
  title   = {Sphere packings and error-correcting codes},
  journal = {Canadian Journal of Mathematics},
  volume  = {23},
  number  = {4},
  year    = {1971},
  pages   = {718--745},
}

@article{nebe2013automorphisms,
  title   = {On automorphisms of extremal even unimodular lattices of dimension 48},
  author  = {Nebe, Gabriele},
  journal = {International Journal of Number Theory},
  volume  = {9},
  number  = {8},
  pages   = {1933--1959},
  year    = {2013},
  doi     = {10.1142/S179304211350067X}
}

@book{martinet2003perfect,
  title     = {Perfect Lattices in Euclidean Spaces},
  author    = {Martinet, Jacques},
  year      = {2003},
  publisher = {Springer-Verlag},
  address   = {Berlin},
  series    = {Grundlehren der mathematischen Wissenschaften},
  volume    = {327},
  doi       = {10.1007/978-3-662-05167-2}
}

@article{blichfeldt1935minimum,
  title   = {The minimum values of positive quadratic forms in six, seven and eight variables},
  author  = {Blichfeldt, Hans Frederick},
  journal = {Mathematische Zeitschrift},
  volume  = {39},
  pages   = {1--15},
  year    = {1935}
}

@article{korkine1877formes,
  title   = {Sur les formes quadratiques positives},
  author  = {Korkine, Alexander and Zolotareff, Yegor},
  journal = {Mathematische Annalen},
  volume  = {11},
  pages   = {242--292},
  year    = {1877}
}

@misc{nebe2026density,
  title        = {Table of Densest Packings Presently Known},
  author       = {Nebe, Gabriele and Sloane, Neil J. A.},
  year         = {2026},
  howpublished = {Part of the Catalogue of Lattices, \url{https://www.math.rwth-aachen.de/~Gabriele.Nebe/LATTICES/density.html}},
  note         = {Accessed August 2026}
}

@article{rains1998shadow,
  title   = {The shadow theory of modular and unimodular lattices},
  author  = {Rains, Eric M. and Sloane, Neil J. A.},
  journal = {Journal of Number Theory},
  volume  = {73},
  pages   = {359--389},
  year    = {1998}
}

@article{hohn2016fixed,
  author  = {H{\"o}hn, Gerald and Mason, Geoffrey},
  title   = {The 290 fixed-point sublattices of the {L}eech lattice},
  journal = {Journal of Algebra},
  volume  = {448},
  pages   = {618--637},
  year    = {2016},
  doi     = {10.1016/j.jalgebra.2015.09.033},
}

@article{nikulin1980integral,
  author  = {Nikulin, V. V.},
  title   = {Integral symmetric bilinear forms and some of their applications},
  journal = {Mathematics of the USSR-Izvestiya},
  volume  = {14},
  number  = {1},
  pages   = {103--167},
  year    = {1980},
  doi     = {10.1070/IM1980v014n01ABEH001060},
}

@misc{cohn2026kissing,
  title        = {Kissing numbers},
  author       = {Cohn, Henry},
  year         = {2026},
  howpublished = {Online table of the best kissing numbers known, \url{https://cohn.mit.edu/kissing-numbers/}},
  note         = {Accessed July 2026}
}

@misc{brouwer2026tables,
  title        = {Bounds for binary constant weight codes},
  author       = {Brouwer, Andries E.},
  year         = {2026},
  howpublished = {Online tables, \url{https://www.win.tue.nl/~aeb/codes/Andw.html}},
  note         = {Accessed July 2026}
}

@article{edel1998kissing,
  author  = {Edel, Yves and Rains, Eric M. and Sloane, Neil J. A.},
  title   = {On kissing numbers in dimensions 32 to 128},
  journal = {Electronic Journal of Combinatorics},
  volume  = {5},
  pages   = {Research Paper 22},
  year    = {1998},
  doi     = {10.37236/1360},
}

@article{delaat2024solving,
  author  = {de Laat, David and Leijenhorst, Nando},
  title   = {Solving clustered low-rank semidefinite programs arising from polynomial optimization},
  journal = {Mathematical Programming Computation},
  volume  = {16},
  number  = {3},
  pages   = {503--534},
  year    = {2024},
  doi     = {10.1007/s12532-024-00264-w},
}

@misc{dutoursikiric2025dimension9,
  title        = {The lattice packing problem in dimension 9 by {Voronoi}'s algorithm},
  author       = {Dutour Sikiri{\'c}, Mathieu and van Woerden, Wessel},
  year         = {2025},
  howpublished = {Preprint, arXiv:2508.20719}
}

\end{document}